\newtheorem{theorem}{Theorem}[section]
\newtheorem{lemma}[theorem]{Lemma}
\newtheorem{corollary}[theorem]{Corollary}
\newtheorem{proposition}[theorem]{Proposition}
\newtheorem{definition}[theorem]{Definition}
\theoremstyle{definition}
\newtheorem{example}[theorem]{Example}
\newtheorem{remark}[theorem]{Remark}
\numberwithin{table}{section}
\numberwithin{equation}{section}
\begin{document}
\title{A  functional calculus  and the complex conjugate of a matrix} 
\author{ Olavi Nevanlinna }
%\date{}                                           % Activate to display a given date or no date
\maketitle

 \begin{center}
{\footnotesize\em 
Aalto University\\
Department of Mathematics and Systems Analysis\\
 email: Olavi.Nevanlinna\symbol{'100}aalto.fi\\[3pt]
}
\end{center}

\begin{abstract}  Based on Stokes' theorem we derive a non-holomorphic functional calculus for  matrices, assuming sufficient smoothness near eigenvalues, corresponding to the size of related Jordan blocks.   It is then applied to the  complex conjugation function $\tau: z \mapsto \overline z$.  The resulting matrix agrees with the hermitian transpose if and only if the matrix is normal.  Two other, as such elementary,  approaches to define the complex conjugate of a matrix yield the same result. %UUSIKSI
 \end{abstract}
\bigskip
%EKA LUKU

{\it Keywords:}  {functional calculus,  non-holomorphic, nondiagonalizable matrices}

 {\it Mathematics subject classification (2010):} { 47A60, 47A56, 30E99}

\section{Overview}

We discuss a simple non-holomorphic  functional calculus for  complex $n \times n$ - matrices $A\in \mathbb M_n(\mathbb C)$ with particular  focus in defining $\tau(A) \in \mathbb M_n(\mathbb C)$ for the complex conjugation function
\begin{equation}\label{tau}
\tau:  z \mapsto \overline z.
\end{equation}
We shall denote the resulting matrix $\tau(A)$ by $A^c$  and it agrees with  $A^*$  if and only if  $A$ is normal.    

Theorem 1.19 in [16]  says that  if   $\Omega$ is an open set either of $\mathbb R$ or $\mathbb C$, and if  $\varphi$ is  $n$-$1$ times continuously differentiable on the set $\Omega $, then $\varphi(A)$ is a continuous matrix function on the set of matrices $A\in \mathbb M_n(\mathbb C)$  with spectrum in $\Omega$. When $\Omega$ is open in $\mathbb C$, then $\varphi$ is in fact assumed to be holomorphic. Our non-holomorphic calculus shall be well defined for all  functions $\varphi$ in $C^{n-1}(\Omega)$, meaning that $\varphi$ has continuous partial derivatives up to oder $n-1$.

Recall first some basic facts on defining $\varphi(A)$ when $\varphi$ is holomorphic.
For $\varphi$ holomorphic in $\Omega$  and  continuous up to boundary, the Cauchy integral
\begin{equation}\label{Cauchy}
\varphi(z) = \frac{1}{2\pi i} \int_{\partial \Omega} \frac{\varphi(\zeta) }{\zeta - z} d\zeta,
\ \ z \in \Omega
\end{equation}
yields the basic definition for $\varphi(A)$,  if $\Omega$ contains the eigenvalues  and  the Cauchy kernel is  replaced by the resolvent
\begin{equation}
\zeta \mapsto (\zeta I - A)^{-1}.
\end{equation}
The integral can be evaluated by computing residues at the eigenvalues.     (On the history  of definitions of $\varphi(A)$, see e.g. Section 1.10 in [16], in particular  Frobenius [12]  used residues to define matrix functions).  Observe that  the use of Cauchy integral does not require transforming  the matrix into Jordan canonical form and representations for the resolvent can be available along the contour, without need to know the exact locations of eigenvalues. 
%%$
%%%%If $\varphi$ is not holomorphic but continuous,  it can be approximated by polynomials  of  two variables of the form $p(z,\overline z)$  and having the conjugate $A^c$ would allow to approximate $\varphi(A)$ by $p(A,A^c)$.   It turns out the conjugate $\tau(A)$ is  always  well defined but the  approximation of $\varphi$  has to be done in a stronger norm whenever the matrix has nontrivial Jordan blocks in  its canonical form. 

%reference

Assume now that  $\varphi \in C^1(\overline \Omega)$.  The Cauchy integral naturally still defines a holomorphic function in $\Omega$ but if $\varphi$ is not holomorphic,  an area integral is needed: 
\begin{equation}\label{Cauchy-Green}
\varphi(z) = \frac{1}{2\pi i} \int_{\partial \Omega} \frac{\varphi(\zeta) }{\zeta - z} d\zeta + \frac{1}{2\pi i} \int_\Omega \frac{\overline{\partial} \varphi(\zeta)}{\zeta-z} d\zeta \wedge  d\overline \zeta, 
\ \ z \in \Omega.
\end{equation}
This  is often referred to as Cauchy-Green  or Pompeiu's formula, [1], [15].  Here $\overline \partial$ denotes  the differential operator  $\frac{1}{2}( \frac{\partial}{ \partial \xi} + i \frac{\partial}{ \partial \eta} )$  when $\zeta=\xi +  i\eta$ is the complex variable. Notice that  $\varphi$ is holomorphic if and only if $\overline  \partial \varphi$ vanishes in $\Omega$.
Again  one can try to replace the kernel by the resolvent and in fact such approaches exist, with special restrictions on $\varphi$ and  on the operator to guarantee the convergence of the area integral, [9], [14],   [7], [8] [18], [4].
% references
We focus here on matrices and therefore all singularities are poles, but in the presence of nontrivial Jordan blocks  higher order poles show up and these are not covered by (\ref{Cauchy-Green}).  However, for diagonalizable matrices $A=TDT^{-1}$   the formula yields immediately
\begin{equation}\label{integraalimaaritelma}
 \frac{1}{2\pi i} \int_{\partial \Omega} \varphi(\zeta)(\zeta I -A)^{-1}  d\zeta + \frac{1}{2\pi i} \int_\Omega {\overline{\partial} \varphi(\zeta)} (\zeta I -A)^{-1}  d\zeta \wedge  d\overline \zeta
=T \varphi(D) T^{-1}
\end{equation}
where $\varphi(D)=  {\rm diag}( \varphi(\lambda_j))$.  This is of course the natural matrix representing $\varphi(A)$ and, again, notice that the integrals provide approximative approaches for computing $\varphi(A)$ without   need to  know the Jordan canonical form of $A$.

In order to treat the higher order poles present with nontrivial Jordan blocks  we derive, based on Stokes' theorem,  a modification of (\ref{Cauchy-Green}), see Proposition \ref{genpompei}.
% numero.
This then allows  a definition of $\varphi(A)$, provided $\varphi$ has required smoothness  around those eigenvalues.  To get a feeling on the required smoothness,  we return first to the holomorphic case.

Let $m_A(z)= \prod_{j=1}^N (z-\lambda_j)^{1+n_j}$ denote the minimal polynomial of $A \in \mathbb M_n(\mathbb C)$.   The following definition is  used  in  [16], following [13].
% references. 

\begin{definition} The  function  $\varphi$ is said to be defined on the spectrum $\sigma(A)=\{\lambda_j\}_1^N$ of $A \in \mathbb M_n(\mathbb C)$  if  the values
\begin{equation}
\varphi^{(\nu)}(\lambda_j),   \ \ \ \nu =0, \dots,  n_j
\end{equation}
exist.   These are called the values of the function $\varphi$ on the spectrum of $A$.
\end{definition}
The matrix $\varphi(A)$ can then be obtained as $p(A)$ where $p(z)$is the Hermite interpolant agreeing with these values  at the spectrum. This definition of $\varphi(A)$   dates back to  Sylvester 1883 [22] and  Buchheim 1886 [2] and  it was shown by Rinehart 1955 [21]
that this agrees with the definition based on the Cauchy intgeral as well as some other approaches.  
Let 
\begin{equation}\label{Jordanblokki}
J=\begin{pmatrix} \lambda&1 & &  &\\
& \cdot & \cdot & & & \\
& & \cdot & \cdot & &\\
& & &\lambda&1& \\
& & & & \lambda\end{pmatrix} = \lambda I + S
\end{equation}
be an $m+1\times m+1$  Jordan block, then we have 
\begin{equation}
\varphi(J) = \sum_{\nu=0}^{m} \frac{\varphi^{(\nu)}(\lambda)}{\nu !} \ S^\nu.
\end{equation}

It is important to notice that here the derivatives are in two different meanings depending whether $\varphi$ is a holomorphic function of complex variable or a sufficiently differentiable function of a real variable - when the eigenvalues are all real.  Our aim  in this paper is to discuss  functions  of  a complex variable $\zeta= \xi + i \eta$
which have enough many continuous partial derivatives wrt $\xi$ and $\eta$.  The  question is  whether there is a natural way to condense the multitude of partial derivatives into one single number for each $\nu$.  It turns out  that if $\varphi \in C^{m}$, then we have based on Proposition \ref{genpompei}   likewise
\begin{equation}
\varphi(J)=\sum_{\nu=0}^{m} \frac{\partial^{\nu}\varphi(\lambda)}{\nu !} \ S^\nu,
\end{equation}
where we denote $\partial =\frac{1}{2}( \frac{\partial}{ \partial \xi} - i \frac{\partial}{ \partial \eta} )$.  Thus, the differential operator $\overline \partial$ does not appear at all and we are lead to define the needed interpolation data as follows.

\begin{definition}\label{uusdef}
Given a matrix $A \in \mathbb  M_n(\mathbb C)$  with minimal polynomial  
$m_A(z)= \prod_{j=1}^N (z-\lambda_j)^{1+n_j}$  the matrix $\varphi(A)\in  \mathbb  M_n(\mathbb C)$ is well defined if  $\varphi$ satisfies the following:  for  each $j$ there exists an $\varepsilon_j>0$ such that $\varphi \in C^{n_j}(B_j)$ where $B_j= \{\zeta \ : \ |\zeta-\lambda_j| \le \varepsilon_j\}$.

If $p$ is the Hermite interpolation polynomial satisfying for $j=1,\cdots, N$
\begin{equation}
p^{(\nu)}(\lambda_j) = \partial ^{\nu} \varphi(\lambda_j) \ \    Ê \text {    for    }\   \nu=0, \cdots, n_j,
\end{equation}
then we set   $\varphi(A):=p(A)$.

\end{definition}

\begin{remark}
This definition applies as such only to functions of a complex variable. However,  if $\varphi$ is a sufficiently smooth function of the real variable with derivative values $\varphi^{(\nu)}(\lambda_j)$  and if the eigenvalues are real  then this definition can still be used if  one extends $\varphi$ locally as follows:
\begin{equation}
\tilde \varphi (\lambda_j + \zeta): = \sum_{\nu=0}^{n_j} \frac{\varphi^{(\nu)}(\lambda_j)} {\nu !} \  \zeta^\nu + \varphi((\zeta + \overline \zeta)/2) - \sum_{\nu=0}^{n_j} \frac{\varphi^{(\nu)}(\lambda_j)}{\nu !} ((\zeta + \overline \zeta)/2)^{\nu}.
\end{equation}
Now we clearly have  
$$
\partial^{\nu}\tilde \varphi(\lambda_j)= \varphi^{(\nu)}(\lambda_j)
$$
so that the definition reproduces the usual matrix.
\end{remark}

\begin{example}
In general different extensions of a function defined on the reals may lead to different matrices. 
Let $\varphi(x)=x^2$ and extend it to complex $z$ in different ways:
$$
\varphi_0(z)=z^2, \ \ \varphi_1(z)=|z|^2,\ \  \varphi_2(z)= \overline z^2, \ \   \varphi_3(z) = ({\rm Re}(z))^2 , \  \  \varphi_4(z)= {\rm Re}(z^2)$$
Then with $A= \begin{pmatrix} \lambda&1\\ &\lambda \end{pmatrix}$ we have 
$
\varphi_0(A)= \begin{pmatrix} \lambda^2&2\lambda\\ &\lambda^2 \end{pmatrix},$ while
$$
\varphi_1(A)= \begin{pmatrix}| \lambda|^2&\overline \lambda\\ &|\lambda|^2 \end{pmatrix},  \ \ \ 
\varphi_2(A)= \begin{pmatrix} \overline \lambda^2&\\ & \overline \lambda^2 \end{pmatrix}, 
$$
and
$$
\varphi_3(A)= \begin{pmatrix}   ({\rm Re}(\lambda))^2 &  {\rm Re}(\lambda) \\ & ({\rm Re}(\lambda))^2 \end{pmatrix}, \ \ \
\varphi_4(A)= \begin{pmatrix}   {\rm Re}(\lambda ^2) &  \lambda \\ & {\rm Re}(\lambda^2) \end{pmatrix}.
$$
\end{example}
Consider now the complex conjugation function $\tau$ in (\ref{tau}).  We have $\partial \tau=0$ and $\overline \partial \tau=1$,  so that $\tau \in C^{\infty}(\mathbb C)$.  Applied to the Jordan block $J=\lambda I + S$ we therefore obtain 
\begin{equation}
\tau(J)= \overline \lambda I.
\end{equation}
 Hence, $\tau(A)$ is the {\it diagonalizable}  matrix $\tau(A)=T \overline D T^{-1}$  where $\overline D = {\rm diag}(\overline \lambda_j)$, if $A=T(D+N)T^{-1}$  is  the Jordan canonical form of $A$.   So,  evaluation of $\tau$ at $A$ looses information on possible Jordan blocks at multiple eigenvalues. As this may sound artificial, we  include in Section 2 an elementary derivation of $\tau(A)$  which is based on a list of simple requirements, which do hold for the adjoint $A \mapsto A^*$ ("hermitean transpose"), whenever $A$ is a normal matrix.  We shall see that these different  approaches lead to the same matrix $\tau(A)$ which we then name as  {\it the conjugate} of $A$.  In fact,  there is still a {\it third } reasoning also agreeing with these and that begins by considering {\it divided differences}, see Section 4.
 
 In Section 3 we discuss the  derivation of $\varphi(A)$ based on the  integral definition and   make some additional observations on the calculus. In  particular the mapping
 $$
 \varphi \mapsto \varphi(A)
 $$
 is  for all $A$ a continuous homomorphism  from sufficiently smooth scalar functions to matrices: 
 $$
 (\varphi \psi)(A) = \varphi(A) \psi(A).
 $$
 In contrast 
$$
A \mapsto \varphi(A)
$$
though well defined, is in general not bounded.  (On continuity of this mapping  for diagonalizabe matrices,  see [20]). 

In Section 4 we mention how the divided differences have to be modified so as to be consistent for functions of the form 
$$
z\mapsto p(z,\overline z)
$$
where $p$ is a polynomial in two variables.  Section 5 contains remarks on viewing $A^c$ as a polynomial of $A$ and Section 6   discusses   some properties of  approximatively computing the area integral  in the integral representation of $\tau(A)$.  In Section 7 we point out that defining  $ \rm{abs}:  z \mapsto {\sqrt{z \ \tau(z)}}$ gives a unique meaning for $ \rm{abs} (A)$ for square matrices, however,  necessarily with some "unwanted" properties.
At the end we mention how this approach is related to the non-holomorphic {\it multicentric calculus}, as presented in [19].

%TOKA LUKU

\section{Defining the conjugate $A^c$} 

The following discussion contains no nontrivial steps but for the sake of completeness   we   give   the details.  We begin with notation.  The complex conjugation  shall be denoted by $\tau$ as in (\ref{tau}).  If $A \in \mathbb M_n(\mathbb C)$ is an $n\times n$ complex matrix, then $\overline A$ stands for the matrix obtained by conjugating each element of $A$.

In addition to $\overline A$ there are two other meanings for complex conjugate of a matrix which are in common use.
 
\begin{example}  If $A$ is a bounded normal operator in a Hilbert space, then 
$$
A\mapsto A^*
$$ provides   the natural involution which is also an isometry.   Since  for  polynomials in two variables one has
$$
\|p(A,A^*)\| = \sup_{z\in \sigma(A)} |p(z,\overline z)|
$$  
the Stone-Weierstrass theorem yields immediately a  way to   define $\varphi(A)$ for all continuous functions.  
\end{example}

\begin{example}
If $A$ is a  diagonalizable matrix $A=T D T^{-1}$, then it is natural to set $\varphi(A)=T \varphi(D) T^{-1}$, where $\varphi (D)$ stands for the diagonal matrix diag$(\varphi(d_i))$.  This implicitly defines  a meaning for the conjugate  when applied to the function $\tau$:
\begin{equation}\label{perus}
A = TDT^{-1} \mapsto A^c= T \overline D T^{-1}.
\end{equation}
Since a normal matrix is unitarily similar to a diagonal one,  we have $A^*=A^c$ whenever $A$ is normal.  The main  focus in this  paper is in extending this calculus to nondiagonalizable matrices.
 \end{example}

\begin{remark}
There is a $C^{n+1}$-functional calculus for matrices  with spectrum on the unit circle (see [6], Prop. 4.5.13 for bounded operators in Banach spaces).  It is consistent with the other functional calculi when applied to diagonalizable matrices, but  not if there are nontrivial Jordan blocks. In fact, when applied to $\tau$ on the unit circle the calculus interpretes the function as restriction of $z\mapsto z^{-1}$  to the unit circle and yields $\tau(A)=A^{-1}.$  
\end{remark}

In order to extend $\tau$ to nondiagonalizable matrices we list some desirable properties which all hold for diagonalizable matrices with $\tau(A)=A^c$.   Here $A \in \mathbb M_n(\mathbb C)$  while $\{e_j\}_1^n$ denotes the standard basis of $\mathbb C^n$.  

\begin{equation}\label{i}
\tau(e_j e_j^*)= e_j e_j^*
\end{equation}
 
\begin{equation}\label{ii}
\tau(\alpha A) = \overline \alpha \  \tau(A),  \ \text{ for all complex } \ \alpha
\end{equation}

\begin{equation}\label{iii}
\tau(TAT^{-1}) = T \tau(A)T^{-1}  \ \text{ for all invertible  } \ T
\end{equation}

\begin{equation}\label{iv}
\tau(A+B)= \tau(A) + \tau(B),  \ \text{ whenever $A$ and $B$ commute }  
\end{equation}

\begin{equation}\label{v}
\tau(A) B = B \tau(A),  \ \text{ whenever $A$ and $B$ commute }
\end{equation}

\begin{equation}\label{vi}
\tau(\tau(A))=A.
\end{equation}
We begin by pointing out that (\ref{i})-(\ref{iv}) determine $\tau $ uniquely when applied to diagonalizable matrices.  In fact, writing a diagonal $D$ 
$$
D=  \sum_{j=1}^n \lambda  _j e_j e_j^*  
$$
yields using   (\ref{iv})  and (\ref{ii}) 
$$
\tau(D) = \sum_{j=1}^n \overline{\lambda  _j} e_j e_j^*  = \overline D.
$$
Then the uniqueness of $\tau$  for diagonalizable matrices follows from (\ref{iii}). 
Recall that two diagonalizable matrices commute if and only if they can be diagonalized with the same similarity transformation. Then it is clear that both (\ref{iv}) and (\ref{v}) hold for diagonalizable matrices.

We shall now show that if $A$ is not diagonalizable, (\ref{i})-(\ref{iv})  still determine $\tau $ uniquely,  but (\ref{vi}) no longer holds as $\tau(A)$ is always diagonalizable.   However, we begin with an illustrative example.

\begin{example}\label{eka}
Let 
$$
A=\begin{pmatrix}
\alpha&\gamma\\
0&\beta
\end{pmatrix}
$$
so that
$$A=TDT^{-1} = \begin{pmatrix}1& \frac{-\gamma}{\alpha-\beta}\\0&1\end{pmatrix}\begin{pmatrix}\alpha&0\\0&\beta\end{pmatrix}\begin{pmatrix}1& \frac{\gamma}{\alpha-\beta}\\0&1\end{pmatrix}.
$$
Since $\tau(D)= \overline D$ we obtain   
$$
\tau(A)= T \overline D T^{-1}= \begin{pmatrix}1& \frac{-\gamma}{\alpha-\beta}\\0&1\end{pmatrix}\begin{pmatrix}\overline \alpha&0\\0&\overline \beta\end{pmatrix}\begin{pmatrix}1& \frac{\gamma}{\alpha-\beta}\\0&1\end{pmatrix}
= \begin{pmatrix} \overline \alpha & \frac{\overline \alpha- \overline \beta}{\alpha-\beta} \gamma\\ 0 & \overline \beta\end{pmatrix}.
$$
Hence, if  $\beta = \alpha - t e^{i\theta}$ and $t \rightarrow 0$ the corner element tends to $e^{- 2i \theta} \gamma$. Thus,  $\tau$ is necessarily discontinouos at $\alpha=\beta$, but we shall see that the requirements above and  the  calculus based on the Stokes' theorem  both force the  corner element to vanish.
 \end{example}

Assume now that $A$ is given in the Jordan canonical form as $A=T(D+N)T^{-1}$, where $D$ is diagonal and $N$ may contain 1's  on the first  superdiagonal.  Because of  (\ref{iii}) we may assume for simplicity that $T=I$.  Write $A= D+N = \sum_k A_k$ where $A_k$ is a block diagonal matrix with one nonzero block  $J_k$  while the other blocks vanish.  Since all matrices $A_k$ commute we conclude that 
again by using (\ref{iv}) repeatedly that
$$
\tau(A)= \sum_k \tau(A_k).
$$
Hence we need to compute $\tau (A_k)$.   Since $A_k$ equals $J_k$ within  the nontrivial block and vanishes elsewhere we can decompose $A_k=D_k+N_k$    where $D_k$  is diagonal and takes the  value $\lambda  _k$ within the block  and vanishes elsewhere, while  $N_k$ has 1's  on the  first upper diagonal within the block and vanishes elsewhere.    Since $D_k$ and $N_k$ commute we have
$$
\tau(A_k)= \overline D_k + \tau(N_k).
$$
We show that $\tau(N_k)=0$.   Let $\alpha \not=0$ and consider  transforming $\alpha N_k$ into Jordan form.  However, $\alpha N_k$ is similar to $N_k$
$$
\alpha N_k= T(\alpha) N_k T(\alpha)^{-1}
$$ with $T(\alpha)$ diagonal with elements being suitable powers of $\alpha$. Hence we obtain by (\ref{ii})
$$
\overline \alpha \  \tau(N_k)= T(\alpha) \tau (N_k) T(\alpha)^{-1}.
$$
Since the left hand side   is a multiple  of $\overline \alpha$ while the right hand side has rational elements of $\alpha$  we must have $\tau (N_k)=0$.

We now summarize  the situation.
\begin{definition}
Let $A \in \mathbb M_n(\mathbb C)$ be given in the Jordan canonical form as
$$
A=T (D+N)T^{-1}
$$
 where $D$ is a diagonal matrix consisting of  the eigenvalues. Then  the conjugate $A^c$ is given by
$$
A^c =T\overline D T^{-1}.
$$
\end{definition}
The following holds.
\begin{proposition}
If $\tau:  \mathbb M_n (\mathbb C) \rightarrow \mathbb M_n (\mathbb C)$ satisfies  (\ref{i})  -  (\ref{iv}), then $\tau(A)=A^c.$   Furthermore $A \mapsto A^c$ satisfies (\ref{v}) always,  while (\ref{vi}) holds exactly when $A$ is diagonalizable.
\end{proposition}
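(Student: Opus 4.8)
The plan is to treat the three claims in turn, the key device being the additive Jordan (Jordan--Chevalley) decomposition. Writing $A = T(D+N)T^{-1}$ as in the definition above, set $A_s := TDT^{-1}$ and $A_n := TNT^{-1}$; then $A_s$ is semisimple, $A_n$ is nilpotent, the two commute, and --- by uniqueness of this decomposition --- both are polynomials in $A$, as are the spectral projections $P_1,\dots,P_N$ onto the generalized eigenspaces (idempotents with $P_iP_j=\delta_{ij}P_i$ and $\sum_j P_j = I$). With this notation $A_s = \sum_{j=1}^N \lambda_j P_j$, and the definition reads $A^c = T\overline D T^{-1} = \sum_{j=1}^N \overline{\lambda_j}\,P_j$; in particular $A^c$ does not depend on the chosen Jordan basis, and it is diagonalizable, since $A^c$ acts on the range of each $P_j$ as the scalar $\overline{\lambda_j}$. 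I would open by recording these facts.

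For the first assertion there is essentially nothing new to do: the computation in the paragraphs preceding the statement already shows that any $\tau$ satisfying (\ref{i})-(\ref{iv}) must equal $A^c$. (One reduces to $T=I$ by (\ref{iii}); splits $A=D+N$ into the commuting block pieces and uses (\ref{iv}); splits each block as $D_k+N_k$ and uses (\ref{iv}), (\ref{ii}), (\ref{i}) to get $\tau(D_k)=\overline{D_k}$; and uses the scaling similarity $\alpha N_k = T(\alpha)N_k T(\alpha)^{-1}$ together with (\ref{ii}) to force $\tau(N_k)=0$.) I would add the easy complementary remark that $A^c$ itself satisfies (\ref{i})-(\ref{iv}), so the characterization is not vacuous: (\ref{i})-(\ref{iii}) are immediate from the formula $A^c=T\overline D T^{-1}$ (with a diagonal renormalization of the Jordan basis needed for (\ref{ii})), while (\ref{iv}) follows because two commuting matrices have commuting semisimple parts, which are then simultaneously diagonalizable, so that $(A+B)_s = A_s+B_s$ and hence $(A+B)^c = A^c+B^c$.

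For (\ref{v}): if $AB = BA$ then $B$ commutes with every polynomial in $A$, in particular with each spectral projection $P_j$, and therefore with $A^c = \sum_j \overline{\lambda_j} P_j$. This argument uses nothing about the Jordan structure of $A$, so (\ref{v}) holds for all $A$. For (\ref{vi}): the distinct eigenvalues of $A^c$ are the numbers $\overline{\lambda_j}$, which are again distinct, and a Lagrange interpolation applied to $A^c$ recovers $P_j$ as the spectral projection of $A^c$ belonging to $\overline{\lambda_j}$; hence applying the conjugation a second time gives $(A^c)^c = \sum_j \overline{\overline{\lambda_j}}\,P_j = \sum_j \lambda_j P_j = A_s$. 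Consequently $\tau(\tau(A)) = A$ holds if and only if $A = A_s$, that is, if and only if $A$ is diagonalizable.

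All the steps are routine; the only points that deserve a line of care are the well-definedness of $A^c$ (which amounts to the canonicity of the semisimple part, equivalently of the spectral projections) and the identification of the spectral projections of $A^c$ with those of $A$. It is the latter that produces the clean identity $(A^c)^c = A_s$ and hence settles (\ref{vi}); I do not expect a genuine obstacle.
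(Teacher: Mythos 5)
Your argument is correct. For the uniqueness claim you essentially recap the paper's own preceding paragraphs, which is also what the paper does in its proof. For (\ref{v}) the paper defers to Section~5, where $A^c$ is exhibited as $\tau_A(A)$ for an explicit conjugating polynomial; you instead invoke the classical fact that the spectral projections $P_j$ (equivalently, the semisimple part $A_s$ in the Jordan--Chevalley decomposition) are polynomials in $A$, so that $A^c=\sum_j\overline{\lambda_j}P_j$ commutes with anything commuting with $A$. Both arguments rest on the same underlying fact --- $A^c$ is a polynomial in $A$ --- but yours is self-contained and avoids the forward reference to Section~5, a small organizational improvement. For (\ref{vi}) the paper gives no explicit argument; your observation that the eigenprojections of $A^c$ coincide with those of $A$, so that $(A^c)^c=\sum_j\lambda_j P_j=A_s$, settles it cleanly, since $(A^c)^c=A$ iff $A=A_s$ iff $A$ is diagonalizable. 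The complementary verification that $A^c$ itself satisfies (\ref{i})--(\ref{iv}) (including the point that commuting matrices have additive semisimple parts, hence $(A+B)^c=A^c+B^c$) is a worthwhile addition that the paper leaves implicit.
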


All other claims have been discussed except  the commuting of $\tau(A)$ and $B$ whenever $A$ and $B$ commute. Notice the structural resemblance with Fuglede's 
theorem, according to which $A^*$ commutes with $B$ whenever $A$ is normal and commutes with $B$.   To prove (\ref{v})   directly from the definition is not difficult but we shall see in the  Section  5 that $A^c$ is a polynomial in $A$ and then the claim  follows immediately.

\begin{example}
Consider  a rank-one matrix $R= u v^*$.  Then, if $v^*u\not=0$ we get
$R^c = \frac{\overline{v^*u}}{v^*u} u v^*$ while, if $v^*u=0$, then $R^c=0$.  If  $u$ and  $v$ are unit vectors and $v^*u \not=0$ then $\|R^c\|=\|R\| =1$

\end{example}
\begin{example}
The conjugate of a companion matrix is not usually a companion matrix any more - except if  all roots are real and distinct.  In fact, consider  the   companion matrix $C$ of the polynomial $z^2-(\lambda_1 + \lambda_2) z + \lambda_1\lambda_2$. We have 
$$
C= \begin{pmatrix} & - \lambda_1 \lambda_2\\
1& \lambda_1+\lambda_2
\end{pmatrix} =  T DT^{-1}
$$
where $D= $diag$(\lambda_1, \lambda_2)$  and $T^{-1}$ the Vandermonde matrix
$$
T^{-1}=\begin{pmatrix} 1&\lambda_1\\
1& \lambda_2 \end{pmatrix}.
$$
This gives
$$
C^c= \frac{1}{\lambda_2-\lambda_1} \begin{pmatrix} 
\lambda_2 \overline \lambda_1 - \lambda_1\overline \lambda_2 & 
\lambda_2 |\lambda_1|^2 - \lambda_1 | \lambda_2|^2 \\
\overline \lambda_2 - \overline \lambda_1 & |\lambda_2|^2 - |\lambda_1|^2 \end{pmatrix}.
$$
If a polynomial has multiple roots then the companion matrix has maximal size Jordan blocks  and in particualr the conjugate being diagonalizable cannot be a companion matrix of a polynomial. 
For example,  with $(z-\lambda)^2$  we have 
$$
C= \begin{pmatrix} & - \lambda^2\\
1& 2 \lambda
\end{pmatrix} =  \begin{pmatrix} -\lambda&1\\
1& \end{pmatrix} 
\begin{pmatrix} \lambda & 1\\ & \lambda
\end{pmatrix}
 \begin{pmatrix} &1\\1&\lambda
\end{pmatrix}
$$
and then  $C^c$ itself is diagonal:
$$
C^c =  \begin{pmatrix} -\lambda&1\\
1& \end{pmatrix} 
\begin{pmatrix} \overline \lambda & \\ & \overline \lambda
\end{pmatrix}
 \begin{pmatrix} &1\\1&\lambda
\end{pmatrix} = \begin{pmatrix} \overline \lambda& \\ & \overline \lambda\end{pmatrix}.
$$

\end{example}

We may extend the Example 2.4 to block triangular matrices as follows.

\begin{example}
Let $A\in \mathbb  M_k(\mathbb C)$ and $B\in \mathbb  M_m(\mathbb C)$ be given and consider $M\in \mathbb M_{k+m}(\mathbb C)$ 
$$
M= \begin{pmatrix}  A & C\\
0&B \end{pmatrix}.
$$
If $\sigma(A) \cap \sigma(B)= \emptyset$, then the Sylvester equation $AX-XB=C$ has a unique solution $X$ such that 
$$
M=\begin{pmatrix} I&-X\\0&I \end{pmatrix}\begin{pmatrix}A&0\\0&B\end{pmatrix}\begin{pmatrix} I&X\\0&I \end{pmatrix}.
$$
Hence
$$
M^c=\begin{pmatrix} I&-X\\0&I \end{pmatrix}\begin{pmatrix}A^c&0\\0&B^c\end{pmatrix}\begin{pmatrix} I&X\\0&I \end{pmatrix}=\begin{pmatrix}  A^c & A^cX-XB^c\\
0&B^c \end{pmatrix}.
$$
\end{example}

Denote by $\rho(A)$ the spectral radius of $A$ and by $\varkappa(T)=\|T\| \|T^{-1}\|$ the condition number of the transformation bringing  $A$ to Jordan form. Then we  clearly have the following.
\begin{proposition}\label{sadeestimaatti} The conjugate  of $A \in \mathbb M_n(\mathbb C)$ satisfies
\begin{equation} \label{perusraja}
\rho(A) \le \|A^c \| \le \varkappa(T) \rho(A).
\end{equation}
\end{proposition}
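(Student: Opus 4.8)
The plan is to read both inequalities directly off the defining identity $A^c = T\overline D T^{-1}$, using only two standard facts: that a submultiplicative operator norm dominates the spectral radius of any matrix, and that the spectral radius is similarity invariant while the operator norm is controlled under similarity by the condition number $\varkappa(T)$.

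For the upper bound I would estimate, by submultiplicativity,
\[
\|A^c\| = \|T\overline D T^{-1}\| \le \|T\|\,\|\overline D\|\,\|T^{-1}\| = \varkappa(T)\,\|\overline D\|.
\]
Since $\overline D = {\rm diag}(\overline{\lambda_j})$ is diagonal, its operator norm equals $\max_j|\overline{\lambda_j}| = \max_j|\lambda_j|$, and the $\lambda_j$ are exactly the diagonal entries of $D$, i.e. the eigenvalues of $A$; hence $\|\overline D\| = \rho(A)$ and $\|A^c\| \le \varkappa(T)\rho(A)$.

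For the lower bound I would apply the inequality $\rho(M)\le\|M\|$, valid for every matrix $M$, to $M=A^c$. Because $A^c = T\overline D T^{-1}$ is similar to $\overline D$, its spectrum is $\{\overline{\lambda_j}\}$, so $\rho(A^c) = \max_j|\overline{\lambda_j}| = \rho(A)$. Combining the two observations gives $\rho(A) = \rho(A^c) \le \|A^c\|$, which completes the chain $\rho(A)\le\|A^c\|\le\varkappa(T)\rho(A)$.

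There is no real obstacle here; the only point worth flagging is the tacit convention on the norm: the argument needs a submultiplicative operator norm for which a diagonal matrix has norm equal to the largest modulus of its entries (any $\ell^p$-induced norm, in particular the spectral norm, qualifies), and $\varkappa(T)$ must be taken with respect to that same norm. One may also note in passing that both bounds are attained simultaneously precisely when $A$ is normal: then $T$ can be chosen unitary, so $\varkappa(T)=1$, and $\|A^c\| = \|A^*\| = \rho(A)$.
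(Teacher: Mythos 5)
Your argument is exactly the one the paper has in mind: the paper simply says ``we clearly have the following,'' relying on $A^c = T\overline D T^{-1}$, submultiplicativity for the upper bound, and $\rho(M)\le\|M\|$ together with similarity invariance of the spectrum for the lower bound. Your caveat about the norm needing to satisfy $\|\overline D\|=\rho(A)$ for diagonal $\overline D$ is a fair and worthwhile observation that the paper leaves implicit.
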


We shall formulate a few other upper bounds for $A^c$  later, see  Propositions 3.7, 4.2 and 5.4.
The definition of conjugation implies immediately that conjugation  commutes with inversion: 
\begin{proposition}
If $A \in \mathbb M_n(\mathbb C)$  is nonsigular, then $(A^c)^{-1}=(A^{-1})^c$.
\end{proposition}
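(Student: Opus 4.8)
The plan is to read off both $(A^c)^{-1}$ and $(A^{-1})^c$ from suitable Jordan decompositions, using only the definition $B^c = S\,\overline{D_B}\,S^{-1}$ attached to a Jordan decomposition $B = S(D_B+N_B)S^{-1}$, together with the fact that inversion preserves Jordan block sizes.

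First I would fix a Jordan decomposition $A = T(D+N)T^{-1}$, where $D$ is the diagonal of eigenvalues listed with multiplicity and $N$ is block-diagonal nilpotent. Since $A$ is nonsingular, every diagonal entry of $D$ is nonzero, so $D$ is invertible and
$$(A^c)^{-1} = \big(T\,\overline{D}\,T^{-1}\big)^{-1} = T\,\overline{D}^{\,-1}\,T^{-1} = T\,\overline{D^{-1}}\,T^{-1},$$
the last equality holding because entrywise conjugation and inversion commute on diagonal matrices.

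Next I would produce a Jordan decomposition of $A^{-1}$. From $A^{-1} = T(D+N)^{-1}T^{-1}$ it suffices to treat one Jordan block $J = \lambda I + S$ with $\lambda\neq 0$: here $J^{-1} = \sum_{\nu\ge 0}(-1)^{\nu}\lambda^{-\nu-1}S^{\nu}$ is upper triangular with the single eigenvalue $1/\lambda$ and with the nonzero entry $-\lambda^{-2}$ on its first superdiagonal, hence is non-derogatory and similar to a single Jordan block of the same size, say $J^{-1} = R_k\big(\tfrac{1}{\lambda}I + \tilde S_k\big)R_k^{-1}$. Assembling these block by block gives $(D+N)^{-1} = R(D'+N')R^{-1}$, where $R$, $D'$, $N'$ are block-diagonal with respect to the Jordan-block partition of $A$ and $D'$ equals the scalar $1/\lambda$ times the identity on the block carrying eigenvalue $\lambda$. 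Consequently $A^{-1} = (TR)(D'+N')(TR)^{-1}$ is a Jordan decomposition of $A^{-1}$, so by definition $(A^{-1})^c = T\,R\,\overline{D'}\,R^{-1}\,T^{-1}$.

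The final step is the observation that $\overline{D'}$ is scalar on each Jordan block and therefore commutes with the block-diagonal matrix $R$; hence $R\,\overline{D'}\,R^{-1} = \overline{D'} = \overline{D^{-1}} = \overline{D}^{\,-1}$, and comparison with the first display yields $(A^{-1})^c = T\,\overline{D^{-1}}\,T^{-1} = (A^c)^{-1}$. The only genuine content --- a mild ``obstacle'' at best --- is the third step: knowing that inversion does not split or merge Jordan blocks, so that the diagonal part of any Jordan form of $A^{-1}$ is conjugate, via a matrix commuting with it, to $D^{-1}$; everything else is bookkeeping. (Alternatively, one could invoke multiplicativity of the calculus together with a composition identity $\tau(A^{-1}) = (\tau\circ\iota)(A)$ for $\iota\colon z\mapsto 1/z$: since $\overline z\cdot\overline{1/z}\equiv 1$ on $\sigma(A)$, this would give $A^c(A^{-1})^c = I$ directly; but the Jordan-form argument avoids having to justify such a composition rule.)
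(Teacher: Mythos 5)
Your proof is correct. The paper offers no proof at all for this proposition --- it is stated to follow ``immediately'' from the definition of conjugation --- so there is nothing to compare against directly; what you have done is spell out the one genuinely non-trivial point hiding behind that ``immediately'', namely that the inverse of a single Jordan block $J=\lambda I+S$ (with $\lambda\neq 0$) is again similar to one Jordan block of the same size, so that $A^{-1}$ inherits a compatible Jordan decomposition from $A$ and the block-diagonal conjugating matrix $R$ commutes with the block-scalar $\overline{D'}$, collapsing to $(A^{-1})^c = T\,\overline{D}^{-1}T^{-1} = (A^c)^{-1}$. This is the argument the paper implicitly relies on, and your treatment of the non-derogatory / rank-one-deficiency step for $J^{-1}$ is the right way to make it rigorous.
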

 
 %KOLMAS LUKU

\section{ Calculus based on Stokes' theorem}

Pompeiu's formula expresses a $C^1$ - function inside a domain in two terms, the first of which extends the boundary information into a holomorphic function  and the second is an area integral  expressing the nonholomorphic part of the function.

Pompeiu's formula   is also called  Cauchy-Green formula.  To state it recall the {\it Wirtinger} differential operators
  
$$\partial =  \frac{\partial}  {\partial z} = \frac{1}{ 2}( \frac{\partial}  {\partial x} - i\frac{\partial}  {\partial y}),
$$
and 
 $$\overline{\partial} =  \frac{\partial}  {\partial \overline{z}} = \frac{1}{ 2}( \frac{\partial}  {\partial x} + i\frac{\partial}  {\partial y}).
 $$
Suppose $\varphi(z)$ is given in the form $\varphi(z)=w(z,\overline z) $.  Then one operates with these differential operators as if the variables $z$ and $\overline z$ were independent.  Here is a version  of the formula, see e.g.    [1], Theorem 2.1.   
 
\begin{proposition} Let $\Omega$ be a bounded domain with  boundary $\partial \Omega$ given by a finite number of piecewise smooth curves.
Assume $\varphi \in C^1(\overline{\Omega}) $. Then for $z\in \Omega$
\begin{equation}
\varphi(z)=\frac{1}{2\pi i} \int_{\partial \Omega} \frac{\varphi(\zeta)}{\zeta-z} d\zeta  +  \frac{1}{2\pi i} \int_{ \Omega} \frac{\overline \partial \varphi(\zeta)}{\zeta-z} d\zeta  \wedge d\overline \zeta .
\end{equation}
\end{proposition}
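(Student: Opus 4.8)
\textit{Proof proposal.} The plan is to reduce the identity to the classical real Stokes (Green) theorem by excising a small disc around the singularity $\zeta=z$ of the Cauchy kernel. Fix $z\in\Omega$ and pick $\varepsilon>0$ small enough that $\overline B(z,\varepsilon)\subset\Omega$. On $\Omega_\varepsilon:=\Omega\setminus\overline B(z,\varepsilon)$ the function $g(\zeta):=\varphi(\zeta)/(\zeta-z)$ is $C^1$ up to the boundary, and $\partial\Omega_\varepsilon$ consists of $\partial\Omega$ (positively oriented) together with the circle $|\zeta-z|=\varepsilon$ traversed clockwise.

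First I would record the $1$-form version of Stokes' theorem: since $dg=\partial g\,d\zeta+\overline\partial g\,d\overline\zeta$ and $d\zeta\wedge d\zeta=0$, one has $d(g\,d\zeta)=dg\wedge d\zeta=\overline\partial g\,d\overline\zeta\wedge d\zeta$, hence
$$\int_{\partial\Omega_\varepsilon} g\,d\zeta=\int_{\Omega_\varepsilon}\overline\partial g\,d\overline\zeta\wedge d\zeta.$$
Applying this with $g=\varphi/(\zeta-z)$, using that $1/(\zeta-z)$ is holomorphic on $\Omega_\varepsilon$ so that $\overline\partial g=\overline\partial\varphi/(\zeta-z)$ there, and splitting the boundary, I obtain
$$\int_{\partial\Omega}\frac{\varphi(\zeta)}{\zeta-z}\,d\zeta-\int_{|\zeta-z|=\varepsilon}\frac{\varphi(\zeta)}{\zeta-z}\,d\zeta=\int_{\Omega_\varepsilon}\frac{\overline\partial\varphi(\zeta)}{\zeta-z}\,d\overline\zeta\wedge d\zeta,$$
with the small circle positively oriented.

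Next I would let $\varepsilon\to0$. On the circle the substitution $\zeta=z+\varepsilon e^{i\theta}$ gives $\int_{|\zeta-z|=\varepsilon}\frac{\varphi(\zeta)}{\zeta-z}\,d\zeta=i\int_0^{2\pi}\varphi(z+\varepsilon e^{i\theta})\,d\theta\to2\pi i\,\varphi(z)$ by continuity of $\varphi$. For the area integral the key point is local integrability of the kernel: in polar coordinates centred at $z$ the singular factor $1/|\zeta-z|$ against $dA$ becomes $dr\,d\theta$, so with $\overline\partial\varphi$ bounded on $\overline\Omega$ the integrand over $\Omega_\varepsilon$ is dominated by a fixed integrable function, and dominated convergence yields $\int_{\Omega_\varepsilon}\to\int_\Omega$. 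Rearranging and using $d\overline\zeta\wedge d\zeta=-\,d\zeta\wedge d\overline\zeta$ produces the stated formula.

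The only genuine point requiring care is the appeal to Stokes' theorem on $\Omega_\varepsilon$, whose boundary is merely piecewise smooth: here I would invoke the version of Green's theorem valid for such domains, or exhaust $\Omega$ from inside by domains with smooth boundary and pass to the limit. The limiting argument for the area integral is the other place needing attention, but it is settled entirely by the observation $1/|\zeta-z|\in L^1_{\mathrm{loc}}(\mathbb R^2)$ above; no cancellation or principal-value interpretation is needed because $\overline\partial\varphi$ is bounded.
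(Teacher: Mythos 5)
Your argument is correct and is essentially the same Stokes-theorem-on-the-punctured-domain argument the paper employs: the paper cites Bell for Proposition 3.1 itself but proves the generalization (Proposition 3.2, the case of higher-order poles) by excising a disc $D_r$ around $z$, applying Stokes on $\Omega\setminus D_r$, evaluating the small-circle boundary term, and letting $r\to 0$ --- your proof is precisely the $n=1$ specialization of that argument, with the $o(1)$ circle term reduced to $2\pi i\,\varphi(z)$ by continuity and the area-integral limit justified by $1/|\zeta-z|\in L^1_{\mathrm{loc}}$. The details (sign conventions for $d\overline\zeta\wedge d\zeta$, orientation of the excised circle, dominated convergence) all check out.
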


If  $\zeta=\xi +i \eta$ then $d\zeta\wedge \overline \zeta =  - 2i \ d\xi \ d\eta$, and  so in terms of  the Lebesgue area measure $\mu_2$,  the  formula looks as follows
$$
\varphi(z)=\frac{1}{2\pi i} \int_{\partial \Omega} \frac{\varphi(\zeta)}{\zeta-z} d\zeta-\frac{1}{\pi}\int_\Omega    \frac{\overline \partial \varphi(\zeta)}{\zeta-z} d\mu_2(\zeta).
$$
With nontrivial Jordan blocks the resolvent has higher order singularities and for that purpose we formulate a modification, which also contains an error term when a small circle is omitted around the singularity.

%PROP 3.2

\begin{proposition}\label{genpompei}
Let $\Omega$ be a bounded domain with  boundary $\partial \Omega$ given by a finite number of piecewise smooth curves. Assume $\varphi\in C^m(\overline \Omega)$ with $m\ge 1$.   Let $z\in \Omega$ and $r$ small enough so that $D_r=\{\zeta \ : |\zeta-z|<r\} \subset \Omega$. Then  for $1\le n \le m+1$ 
\begin{align}\label{korkeatderivaatat}
& \frac{1}{(n-1)!}\ \partial ^{n-1} \varphi(z)  \\ 
& =  \frac{1}{2\pi i} \int_{\partial \Omega} \frac{\varphi(\zeta)}{(\zeta-z)^n} d\zeta  + \frac{1}{2\pi i} \int_{ \Omega \setminus D_r} \frac{\overline \partial \varphi(\zeta)}{(\zeta-z)^n} d \zeta  \wedge d \overline \zeta \\
& -  \sum_{1\le k \le (m+1-n)/2)} \frac{\partial^{n-1+k}\overline\partial ^k \varphi(z)}{(n-1+k)! \  k!} \ r^{2k}  + o(r^{m+1-n}) .
\end{align}
 
 \end{proposition}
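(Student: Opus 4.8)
The plan is to peel off, from the contour and area integrals on the right, the contribution of a small circle $\partial D_r$ centred at $z$ by means of Stokes' theorem, and then to evaluate that circular integral asymptotically through a Taylor expansion of $\varphi$ at $z$ written in the Wirtinger variables.

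First I would apply the complex form of Stokes' theorem on the punctured domain $U=\Omega\setminus\overline{D_r}$ (shrinking $r$ a little if necessary so that $\overline{D_r}\subset\Omega$; then $\partial U=\partial\Omega\cup\partial D_r$ is again a finite union of piecewise smooth curves) to the $1$-form $g(\zeta)\,d\zeta$ with $g(\zeta)=\varphi(\zeta)(\zeta-z)^{-n}$. Since $z\notin\overline U$ we have $g\in C^1(\overline U)$, and $d(g\,d\zeta)=\overline\partial g\,d\overline\zeta\wedge d\zeta$; because $(\zeta-z)^{-n}$ is holomorphic off $z$, on $U$ one has $\overline\partial g=(\overline\partial\varphi)(\zeta-z)^{-n}$. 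Keeping track of orientations — $\partial\Omega$ counterclockwise, $\partial D_r$ clockwise as a boundary component of $U$ — and of the sign in $d\overline\zeta\wedge d\zeta=-d\zeta\wedge d\overline\zeta$, this yields
\[
\frac{1}{2\pi i}\int_{\partial D_r}\frac{\varphi(\zeta)}{(\zeta-z)^n}\,d\zeta
=\frac{1}{2\pi i}\int_{\partial\Omega}\frac{\varphi(\zeta)}{(\zeta-z)^n}\,d\zeta
+\frac{1}{2\pi i}\int_{\Omega\setminus D_r}\frac{\overline\partial\varphi(\zeta)}{(\zeta-z)^n}\,d\zeta\wedge d\overline\zeta,
\]
whose right-hand side is exactly the first two terms of the claimed identity; the area integral here is finite because $\overline\partial\varphi$ is continuous (we have $m\ge 1$) and $(\zeta-z)^{-n}$ is bounded by $r^{-n}$ off $D_r$. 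Specialising to $n=1$ and letting $r\to 0$ recovers the Pompeiu formula stated above, a convenient consistency check.

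It then remains to identify the left-hand side of the last display. I would write the order-$m$ Peano expansion of $\varphi$ at $z$, re-expressed in $\zeta-z$ and $\overline\zeta-\overline z$ (legitimate since $\varphi\in C^m$, and mixed Wirtinger derivatives of total order $\le m$ commute),
\[
\varphi(\zeta)=\sum_{j+k\le m}\frac{\partial^{j}\overline\partial^{k}\varphi(z)}{j!\,k!}(\zeta-z)^{j}(\overline\zeta-\overline z)^{k}+R(\zeta),\qquad R(\zeta)=o(|\zeta-z|^{m})\ \ (\zeta\to z).
\]
On $\partial D_r$ put $\zeta=z+re^{i\theta}$; then $\tfrac{1}{2\pi i}\int_{\partial D_r}(\zeta-z)^{j-n}(\overline\zeta-\overline z)^{k}\,d\zeta=\tfrac{r^{\,j+k-n+1}}{2\pi}\int_0^{2\pi}e^{i(j-k-n+1)\theta}\,d\theta$, which vanishes unless $j=n-1+k$ and then equals $r^{2k}$. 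Summing over the surviving pairs $(j,k)=(n-1+k,k)$ subject to $j+k\le m$, i.e. $0\le k\le(m+1-n)/2$, reproduces the $k=0$ term $\tfrac{1}{(n-1)!}\partial^{n-1}\varphi(z)$ together with the finite sum $\sum_{1\le k\le(m+1-n)/2}\frac{\partial^{n-1+k}\overline\partial^{k}\varphi(z)}{(n-1+k)!\,k!}r^{2k}$ of the statement (every derivative occurring here has total order $\le m$, so it exists). For the remainder, given $\varepsilon>0$ one has $|R(\zeta)|\le\varepsilon|\zeta-z|^{m}$ for $|\zeta-z|$ small, hence for small $r$
\[
\Big|\frac{1}{2\pi i}\int_{\partial D_r}\frac{R(\zeta)}{(\zeta-z)^n}\,d\zeta\Big|\le\frac{1}{2\pi}\cdot 2\pi r\cdot\frac{\varepsilon r^{m}}{r^{n}}=\varepsilon\,r^{m+1-n},
\]
so this term is $o(r^{m+1-n})$. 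Combining the three displays gives the proposition.

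The only steps needing care — rather than any genuine obstacle — are the orientation and sign bookkeeping in Stokes' theorem, and the remark that the $o(|\zeta-z|^m)$ control on the Peano remainder is uniform over $\theta$ on the shrinking circle $|\zeta-z|=r$, which is precisely what lets the little-$o$ pass through the circular integral; both are routine.
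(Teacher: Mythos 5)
Your proposal is correct and follows essentially the same route as the paper: apply Stokes' theorem to $\varphi(\zeta)(\zeta-z)^{-n}\,d\zeta$ on the punctured domain $\Omega\setminus D_r$, then evaluate the circle integral by inserting the order-$m$ Taylor (Peano) expansion of $\varphi$ in the Wirtinger variables and using the orthogonality $\int_0^{2\pi}e^{i(j-k+1-n)\theta}\,d\theta=0$ unless $j=n-1+k$. The only difference is that you spell out the orientation bookkeeping and the uniformity of the $o(|\zeta-z|^m)$ remainder more explicitly than the paper does.
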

\begin{proof}

Let $\Omega$ be a bounded domain with nice boundary and $\omega$ a complex 1-form on $\Omega$. Then Stokes'  theorem says:
$$
\int_{\Omega} d\omega = \int_{\partial \Omega} \omega.
$$
Take a fixed $z\in\Omega$ and choose a radius $r$ small enough  so that  
$$D_r=\{\zeta \ : |\zeta-z| <r \} \subset \Omega.
$$
From $\varphi \in C^m(\overline \Omega)$ we conclude that expanding $\varphi$ around $z$ there are coefficients $a_{jk}$ and a function $h$, depending on $z$,  such that
\begin{equation}\label{sileys}
\varphi(z+\zeta)= \sum_{0 \le j,k ; \ j+k \le m} a_{jk}\zeta^j {\overline \zeta}^k + h(\zeta),   \ \text{ where }  h(\zeta) = o(|\zeta|^m).
\end{equation}
We consider 1-forms of the form
$$
\omega = \frac{\varphi(\zeta)}{(\zeta-z)^n} d\zeta.
 $$
where $\varphi$ is  a smooth function. 
In the domain $\Omega_r= \Omega \setminus D_r$  the  1-form $\omega$ has no singularities and we can apply the Stokes' theorem. As the orientation of $\partial D_r$ is opposite of that of $\partial\Omega$ we obtain
$$
\int_{\Omega_r} d\omega = \int_{\partial \Omega_r} \omega =
 \int_{\partial \Omega} \omega - \int_{\partial D_r} \omega.
 $$
Here the boundary integral over $D_r$ is easy to compute and as it has a limit when $r\rightarrow 0$,  then so has the area integral over $\Omega_r$.
Consider therefore   the integral over $\partial D_r$. Parametrizing the boundary by $ z+re^{i\theta}$ we have
$$
\int_{\partial D_r}\omega= \int_0^{2\pi} \frac{\varphi(z+re^{i\theta})} {r^n e^{in\theta}}  r ie^{i\theta}d\theta
$$
and substituting here $\varphi$ from (\ref{sileys}) gives
\begin{align}
\int_{\partial D_r}\omega= & \  i\sum_{j,k} a_{jk} \int_0^{2\pi}
r^{j+k+1-n}e^{i(j-k+1-n)\theta} d\theta +o(r^m) r^{1-n} \\
= &2\pi i \  \sum_{k} a_{n-1+k,k} \ r^{2k}  + o(r^{m+1-n})\\
= &2\pi i \  \sum_{k} \frac{\partial^{n-1+k}\overline\partial ^k \varphi(z)}{(n-1+k)! \ k!} \ r^{2k}  + o(r^{m+1-n}).
\end{align}
 
 Since 
$$
d\omega = \frac{\overline \partial \varphi(\zeta)}{(\zeta -z)^n} d\overline \zeta \wedge d\zeta = - \frac{\overline \partial \varphi(\zeta)}{(\zeta -z)^n} d\zeta \wedge d\overline \zeta,
$$
the claim follows.
  
\end{proof}

Assume now that $A\in \mathbb M_n(\mathbb C)$ is given and $\Omega $ is  as in  Proposition 3.2 including the eigenvalues $\{\lambda_j\} =Ê\sigma(A) \subset \Omega$.
 We aim to show that the integral formula 
 \begin{equation}\label{kaava}
 \varphi(A)= \frac{1}{2 \pi i} \int_{\partial \Omega} \varphi(\zeta) (\zeta I- A)^{-1} d\zeta
 + \frac{1}{2 \pi i} \int_{\Omega} \overline{\partial} \varphi (\zeta) (\zeta I- A)^{-1} d\zeta
 \wedge d \overline \zeta
 \end{equation}
is consistent with  the $\varphi(A)$ as determined in Definition 1.2. For holomorphic functions Chapter 6 in [17] covers carefully different  approaches in defining $\varphi(A)$.  It is clear that, in the same way as in the holomorphic functional calculus,  the integral formula implies
$$
\varphi(A)= T \varphi(J)T^{-1}
$$
where $A=TJT^{-1}$  denotes the Jordan canonical form of $A$.  The boundary integral is well defined  as the resolvent is bounded along the boundary, but the area integral needs special consideration as the  resolvent has poles inside $\Omega$.  So,   if we write the $J=\oplus_j J_j$, then we can further reduce the discussion to each Jordan block $J_j$  separately as they  operate independently of each others. Let therefore
\begin{equation}\label{Jordanblokkiuudestaan}
J_j=\begin{pmatrix} \lambda_j&1 & &  &\\
& \cdot & \cdot & & & \\
& & \cdot & \cdot & &\\
& & &\lambda_j&1& \\
& & & & \lambda_j\end{pmatrix} = \lambda_j I + S
\end{equation}
 be an $1+n_j \times 1+ n_j$ Jordan block, so that
$$
(\zeta I - J_j)^{-1} = \sum_{\nu=0}^{n_j} \frac{1}{(\zeta-\lambda_j)^{\nu +1}} \ S^\nu.
$$
Let $D_r$ denote as before a disc centered at $\lambda_j$ . Substituting  the resolvent  into (\ref{kaava}) yields immediately the existence of the area integral and we have
\begin{align}
 & \frac{1}{2 \pi i} \int_{\partial \Omega} \varphi(\zeta) (\zeta I- J_j)^{-1} d\zeta
 + \lim_{r\rightarrow 0} \frac{1}{2 \pi i} \int_{\Omega \setminus  D_r} \overline{\partial} \varphi (\zeta) (\zeta I- J_j)^{-1} d\zeta \wedge d \overline \zeta \\
 =  & \sum_{\nu=0}^{n_j}  \frac{1}{\nu !} \partial ^\nu \varphi(\lambda_j) S^{\nu}.
\end{align}
We formulate this  as a separate result.

\begin{proposition}
Let $A \in \mathbb M_n(\mathbb C)$ with $n\ge 2$ be given and assume
 $\Omega$ is  a bounded open simply connected set with a piecewise smooth boundary such that $\Omega$ contains the eigenvalues  $\sigma(A)$ of $A$.     If $\varphi \in C^{n-1} (\overline \Omega)$ is given, then  $\varphi(A)$ as defined by Definition 1.2 equals the matrix obtained from  (\ref{kaava}).
\end{proposition}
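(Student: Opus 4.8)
The plan is to reduce the statement to a single Jordan block and then read off the value directly from Proposition \ref{genpompei}. The first step is to observe that both sides transform in the same way under the two reductions furnished by the Jordan form. For a similarity $A = TJT^{-1}$ this is the resolvent identity $(\zeta I - TJT^{-1})^{-1} = T(\zeta I - J)^{-1}T^{-1}$, which turns the matrix produced by (\ref{kaava}) into $T[\,\cdots\,]T^{-1}$, just as Definition \ref{uusdef} gives $p(A) = Tp(J)T^{-1}$; for a block decomposition $J = \bigoplus_j J_j$ it is that the resolvent is block diagonal, so (\ref{kaava}) splits into the sum of the corresponding per-block integrals, while the Hermite interpolant $p$ acts blockwise (when several blocks carry the same eigenvalue the computation below is verbatim the same, each block using the one global interpolant). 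Since $\deg m_A = \sum_j(1+n_j)\le n$, every $n_j\le n-1$, so $\varphi\in C^{n-1}(\overline\Omega)$ is in particular $C^{n_j}$ near $\lambda_j$ and Definition \ref{uusdef} is applicable, with $p(J_j) = \sum_{\nu=0}^{n_j}\frac{p^{(\nu)}(\lambda_j)}{\nu!}\,S^\nu = \sum_{\nu=0}^{n_j}\frac{\partial^\nu\varphi(\lambda_j)}{\nu!}\,S^\nu$. The task thus becomes: (\ref{kaava}) applied to $J_j = \lambda_j I + S$ of size $1+n_j$ returns exactly $\sum_{\nu=0}^{n_j}\frac{\partial^\nu\varphi(\lambda_j)}{\nu!}\,S^\nu$.

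To get this I would substitute $(\zeta I - J_j)^{-1} = \sum_{\nu=0}^{n_j}(\zeta-\lambda_j)^{-\nu-1}S^\nu$ into (\ref{kaava}) and look at the coefficient of each $S^\nu$. The scalar area integral $\frac{1}{2\pi i}\int_\Omega \overline\partial\varphi(\zeta)(\zeta-\lambda_j)^{-\nu-1}\,d\zeta\wedge d\overline\zeta$ is only conditionally convergent once $\nu\ge 1$, so it has to be read as $\lim_{r\to 0}\int_{\Omega\setminus D_r}$ with $D_r$ a small disc about $\lambda_j$ — and that this limit exists is precisely the content of Proposition \ref{genpompei}. Applying that proposition at $z = \lambda_j$ to $\varphi$ with $m = n-1$ and with the integer parameter there (call it $\ell$) set equal to $\nu+1$, the admissibility condition $1\le\ell\le m+1$ reads $\nu\le n-1$, which holds as $\nu\le n_j$, and one obtains
\[
\frac{1}{2\pi i}\int_{\partial\Omega}\frac{\varphi(\zeta)}{(\zeta-\lambda_j)^{\nu+1}}\,d\zeta + \frac{1}{2\pi i}\int_{\Omega\setminus D_r}\frac{\overline\partial\varphi(\zeta)}{(\zeta-\lambda_j)^{\nu+1}}\,d\zeta\wedge d\overline\zeta = \frac{\partial^{\nu}\varphi(\lambda_j)}{\nu!} + \sum_{1\le k\le (n-1-\nu)/2}\frac{\partial^{\nu+k}\overline\partial^{k}\varphi(\lambda_j)}{(\nu+k)!\,k!}\,r^{2k} + o\!\left(r^{\,n-1-\nu}\right).
\]
Letting $r\to 0$ annihilates the finite correction sum and the $o(\cdot)$ term — the exponent $n-1-\nu\ge 0$ is nonnegative, so in the extremal case $\nu=n_j=n-1$ the sum is empty and the error is merely $o(1)$ — leaving the scalar $\partial^\nu\varphi(\lambda_j)/\nu!$. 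Multiplying by $S^\nu$ and summing over $\nu=0,\dots,n_j$ reproduces $\sum_{\nu=0}^{n_j}\frac{\partial^\nu\varphi(\lambda_j)}{\nu!}\,S^\nu$, which is $p(J_j)$ from the first step.

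Putting the blocks back together and conjugating by $T$ then shows that (\ref{kaava}) produces $T\bigl(\bigoplus_j p(J_j)\bigr)T^{-1} = p(A)$, i.e. $\varphi(A)$ in the sense of Definition \ref{uusdef}; in passing this also yields that the value of (\ref{kaava}) does not depend on the admissible choice of $\Omega$. I expect the only genuinely delicate point to be the $r\to 0$ bookkeeping: one must be confident that the individually divergent scalar area integrals are exactly the ones reorganised by Proposition \ref{genpompei} into a convergent improper integral, and that the finite matrix sum may be interchanged with the limit termwise — immediate once each scalar limit is known, but worth stating explicitly. Everything else is routine; the hypothesis $n\ge 2$ enters only to make Proposition \ref{genpompei} (which needs $m\ge 1$) available, the case $n=1$ being the scalar Cauchy--Green formula for $\varphi(\lambda_1)$.
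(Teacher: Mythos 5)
Your proof is correct and follows essentially the same route as the paper: reduce via the Jordan form and direct-sum decomposition to a single block, expand the resolvent as $\sum_\nu (\zeta-\lambda_j)^{-\nu-1}S^\nu$, and apply Proposition \ref{genpompei} scalar-by-scalar with the smoothness parameter set to $n-1$ and the pole order to $\nu+1$, letting $r\to 0$ to kill the correction sum and the $o(r^{n-1-\nu})$ term. The only difference is that you spell out the bookkeeping (the admissibility check $\nu\le n_j\le n-1$, the role of $n\ge 2$ in making $m\ge 1$, and why the error terms vanish) which the paper states more tersely.
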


\begin{remark}
Recall that  in Definition 1.2  we only requested the partial derivatives to exist and be continuous locally around eigenvalues.    So, if $B_j=\{\zeta \ : |\zeta -\lambda_j| \le \varepsilon_j\}$  are small enough so that  they  do not overlap each others,  and $\varphi$ is only assumed to  be defined in these discs,  the following is possible.  First take a simply connected $\Omega$ containing all these discs and satifying the assumptions in Proposisition 3.3.  Then you could extend $\varphi$ smoothly from $B_j$'s to $\Omega \setminus (\cup_j B_j)$ e.g. by using suitable cut-off functions.  Then one could apply (\ref{kaava})  and since the result does not  depend on $\Omega$ as long as it contains  all eigenvalues, once could shrink it to separate discs $B_j$, in the same way as one would shrink a contour integral of a rational  function to surround each pole separately.   
Thus, the only case where the Definition 1.2 works which is not covered by the calculus obtained by the Cauchy-Green formula, is when  $n_j=0$, as then  no  partial derivatives are asked for at all in the definition,  while the formula  to work  asks for   $\varphi \in C^1$.\end{remark}
 
\begin{corollary}
Let $A\in \mathbb M_n(\mathbb C)$ be given and  $\Omega$ an open set containing $\sigma(A)$.  Then the mapping  
$$\varphi \mapsto \varphi(A)$$
from $C^{n-1}(\overline \Omega)$ to $\mathbb M_n(\mathbb C)$ is a homomorphism.
\end{corollary}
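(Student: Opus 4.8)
The plan is to deduce the homomorphism property from the Hermite interpolation description in Definition 1.2 rather than directly from the integral formula~(\ref{kaava}); this is legitimate because Proposition 3.3 identifies the two descriptions of $\varphi(A)$ on $C^{n-1}(\overline\Omega)$, and because linear combinations and products of $C^{n-1}$ functions again lie in $C^{n-1}(\overline\Omega)$. Additivity and $\mathbb{C}$-linearity of $\varphi\mapsto\varphi(A)$ will be essentially automatic: $\partial$ is linear, so the interpolation data $\{\partial^{\nu}(\alpha\varphi+\beta\psi)(\lambda_j)\}$ of a linear combination is the corresponding linear combination of the data for $\varphi$ and for $\psi$, and passing from prescribed Hermite data to the interpolating polynomial is itself linear; the constant function $1$ has data $\partial^{0}1=1$ and $\partial^{\nu}1=0$ for $\nu\ge1$, hence interpolant $\equiv 1$ and $1(A)=I$. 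The only substantial point is multiplicativity, $(\varphi\psi)(A)=\varphi(A)\psi(A)$.

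For that, the key step will be to exploit that the Wirtinger operator $\partial=\tfrac12(\partial_{\xi}-i\partial_{\eta})$ is a derivation on $C^{n-1}$ — being a fixed linear combination of the derivations $\partial_{\xi}$ and $\partial_{\eta}$ — and therefore satisfies the higher-order Leibniz rule
\[
\partial^{\nu}(\varphi\psi)(\lambda_j)=\sum_{k=0}^{\nu}\binom{\nu}{k}\,\partial^{k}\varphi(\lambda_j)\,\partial^{\nu-k}\psi(\lambda_j),\qquad 0\le\nu\le n_j,
\]
with every term using at most $n_j\le n-1$ derivatives of each factor, and with $\overline\partial$ never entering. I would then let $p$ and $q$ be the Hermite interpolants attached to $\varphi$ and $\psi$ by Definition 1.2, note that $\tfrac{d}{dz}$ obeys the identical Leibniz rule on polynomials, and conclude that $pq$ carries exactly the interpolation data of $\varphi\psi$, i.e. $(pq)^{(\nu)}(\lambda_j)=\partial^{\nu}(\varphi\psi)(\lambda_j)$ for all $j$ and all $\nu\le n_j$. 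Hence $pq$ differs from the Hermite interpolant $r$ of $\varphi\psi$ by a multiple of the minimal polynomial $m_A(z)=\prod_j(z-\lambda_j)^{1+n_j}$, say $pq=r+m_A g$ with $g\in\mathbb{C}[z]$.

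The conclusion is then immediate from the polynomial functional calculus: since $s\mapsto s(A)$ is an algebra homomorphism $\mathbb{C}[z]\to\mathbb{M}_n(\mathbb{C})$ and $m_A(A)=0$, one gets $\varphi(A)\psi(A)=p(A)q(A)=(pq)(A)=r(A)+m_A(A)g(A)=r(A)=(\varphi\psi)(A)$. I do not expect a genuine obstacle here; the one thing that needs care is verifying the higher Leibniz identity for $\partial$ and checking that $C^{n-1}$ regularity is both preserved under products and sufficient to supply all the partial derivatives that occur. In effect the non-holomorphic calculus is a homomorphism for precisely the reason the holomorphic one is: $\partial$ is a derivation, and the operator $\overline\partial$ has dropped out of the interpolation data altogether.
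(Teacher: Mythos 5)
Your argument is correct and is a worked-out version of the first route the paper itself sketches for the product identity (``by transforming $A$ into Jordan canonical form and using the Definition 1.2 together with computing the partial derivatives required''); the paper leaves the crux implicit. The key observation you make explicit --- that $\partial$ is a derivation and hence satisfies the higher-order Leibniz rule, so that $\overline\partial$ drops out of the interpolation data for a product exactly as it does for each factor --- is precisely what makes that route work, and organizing the computation via the Hermite interpolants and the minimal polynomial (so that $pq$ agrees with the interpolant of $\varphi\psi$ modulo $m_A$ and $m_A(A)=0$ finishes it) is a clean way to reach the paper's conclusion without explicit Jordan-block manipulations.
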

\begin{proof}
The linearity is obvious from the definition:
$$
(\alpha \varphi + \beta \psi)(A)= \alpha \varphi(A) + \beta \psi(A),
$$
while  the product
$$
(\varphi \psi)(A)= \varphi(A) \psi(A)
$$
 can be concluded either by transforming  $A$ into Jordan canonical form and  using the Definition 1.2 together with computing the partial derivatives required, or directly as such from the integral formula  in Proposition 3.2.

\end{proof}

%SEURAAVASSAPICK NEVANLINNA YMS

From Definition 1.2 it is clear that $\varphi(A)$ agrees with any $f(A)$ where $f$ is holomorphic in the neighborhood of   the spectrum  of $A$ and agrees with the same interpolation data.   This can be used to derive a bound for $\varphi(A)$.  Consider first the  following interpolation problem.

  Given $\{\lambda_j\}_{j=1}^N \subset \mathbb D$,  nonnegative integers $n_j$ and complex numbers $w_{j, \nu}$ to find a   bounded holomorphic function $f$ in the unit disc such that it satisfies the interpolation conditions
\begin{equation}\label{NPmult}
f^{(\nu)}(\lambda_j) = w_{j,\nu} \ {\text  for } \ j=1,\dots, N; \ \nu=0, \dots, n_j.
\end{equation}
Since  there exists a unique  Hermite interpolation polynomial solving this problem,  the interest is in finding a solution with a small norm in $H^\infty(\mathbb D)$.   It was shown in [Earl], Theorem 3  (see  [5], Theorem 9.6)  that there exists a unique function $h$ with smallest norm and that $h$ is a constant multiple of a finite Blaschke product with at most 
$
\sum_{j=1}^N (1+n_j) -1
$ factors.  By scaling the  matrix $A$  suitably, we may assume that  $\|A\| \le 1 $ and $\rho(A) <1$. 

  \begin{proposition}
Given $A\in \mathbb M_n(\mathbb C)$ such that $\sigma(A) \subset \mathbb D$ and $\|A\| \le 1$ with minimal polynomial $m_A(z) = \prod_{j=1}^N (z-\lambda_j)^{1+n_j}$.  Let     $\varphi$ satisfy the conditions of Definition \ref{uusdef}, then 
\begin{equation}
\| \varphi(A)Ê\| \le \sup_{|z| \le 1} |h(z)|,
\end{equation}
where $h$ is the unique minimal norm solution of  the interpolation problem {\rm (\ref{NPmult})  }with $w_{j,\nu}= \partial^\nu \varphi(\lambda_j)$.

\end{proposition}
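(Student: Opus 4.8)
The plan is to reduce the bound to the classical von Neumann inequality by trading $\varphi$ for a holomorphic function carrying the same data on $\sigma(A)$. By Definition \ref{uusdef} we have $\varphi(A)=p(A)$, where $p$ is the Hermite interpolant determined by $p^{(\nu)}(\lambda_j)=\partial^\nu\varphi(\lambda_j)$ for $\nu=0,\dots,n_j$. On the other hand, the minimal-norm solution $h$ of the interpolation problem {\rm (\ref{NPmult})} with $w_{j,\nu}=\partial^\nu\varphi(\lambda_j)$ is, by Earl's theorem (already invoked in the excerpt), a constant multiple of a finite Blaschke product; in particular $h$ is a rational function whose poles all lie in $\{|z|>1\}$, so it is holomorphic in a neighbourhood of $\overline{\mathbb D}$, and since $\rho(A)<1$ it is holomorphic in a neighbourhood of $\sigma(A)$. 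Because $h$ and $p$ have the same values and derivatives up to order $n_j$ at each $\lambda_j$, the observation recorded just after Proposition 3.3 — that $\varphi(A)$ coincides with $f(A)$ for any $f$ holomorphic near $\sigma(A)$ sharing the same interpolation data — yields
\[
\varphi(A)=p(A)=h(A),
\]
where $h(A)$ denotes the holomorphic functional calculus applied to $h$; equivalently, writing $h(z)=\sum_{k\ge 0}c_k z^k$, it is the series $\sum_k c_k A^k$, which converges absolutely since $\|A^k\|^{1/k}\to\rho(A)<1$ while the radius of convergence of $h$ exceeds $1$.

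It then remains to estimate $\|h(A)\|$. Since $\|A\|\le 1$ and $h$ lies in the disc algebra, von Neumann's inequality gives $\|h(A)\|\le\|h\|_{H^\infty(\mathbb D)}$. Concretely, the partial sums $h_m(z)=\sum_{k\le m}c_k z^k$ converge to $h$ uniformly on $\overline{\mathbb D}$ (as the radius of convergence is greater than $1$) and $h_m(A)=\sum_{k\le m}c_kA^k\to h(A)$ in norm; applying the polynomial von Neumann inequality to each $h_m$ and letting $m\to\infty$ gives $\|h(A)\|\le\lim_m\sup_{|z|\le 1}|h_m(z)|=\sup_{|z|\le 1}|h(z)|$, the last supremum being attained on $|z|=1$ by the maximum modulus principle. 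Combining this with the previous paragraph,
\[
\|\varphi(A)\|=\|h(A)\|\le\sup_{|z|\le 1}|h(z)|,
\]
which is the assertion.

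The only step that requires genuine care is the identification $\varphi(A)=h(A)$: it depends entirely on the consistency of the non-holomorphic calculus of Definition \ref{uusdef} with the ordinary holomorphic functional calculus whenever the two prescriptions agree on $\sigma(A)$, which is precisely the content of the remark following Proposition 3.3 (holomorphic functional calculus also reduces to Hermite interpolation on the spectrum, so $h(A)$ equals $q(A)$ for the Hermite interpolant $q$ of the values $h^{(\nu)}(\lambda_j)=\partial^\nu\varphi(\lambda_j)$, and $q=p$). Once that is granted, nothing beyond von Neumann's inequality and Earl's theorem is used. One could even bypass the explicit extremal $h$: for every $g$ in the disc algebra meeting the interpolation conditions one has $\|\varphi(A)\|=\|g(A)\|\le\|g\|_{H^\infty(\mathbb D)}$, and taking the infimum over all such $g$ reproduces the bound, Earl's theorem then serving only to identify the extremal norm with $\sup_{|z|\le 1}|h(z)|$.
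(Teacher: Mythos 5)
Your proof is correct and follows essentially the same route as the paper: identify $\varphi(A)$ with $h(A)$ (both equal the Hermite interpolant of the data $\partial^\nu\varphi(\lambda_j)$ evaluated at $A$), approximate $h$ uniformly on $\overline{\mathbb D}$ by polynomials using that a finite Blaschke product is holomorphic on a neighbourhood of $\overline{\mathbb D}$, and pass to the limit in von Neumann's inequality. You are somewhat more explicit than the paper about why $\varphi(A)=h(A)$ and about the uniform convergence of the Taylor series, but these are details the paper takes for granted rather than a different argument.
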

\begin{proof}
Since  the spectrum of $A$ is in the open unit disc $h(A)$ is well defined in the holomorphic functional calculus and on the other hand $h$, being a multiple of a finite Blaschke product, can be approximated in the closed unit uniformly with polynomials $p_k$ so that $p_k(A) \rightarrow h(A)$.  Then the result follows from the von Neumann result
$
\|p(A)\| \le \sup_{|z| \le 1} |p(z)|.
$
\end{proof}

For diagonalizable matrices we can formulate a  more  quantitative estimate, by  borrowing some deep results from function theory to our finite interpolation problem.    Recall that 
$\Lambda= \{\lambda_j\}_{j=1}^\infty \subset \mathbb D$ is called  an {\it interpolation sequence} if the interpolation problem
\begin{equation}
f(\lambda_j) =  w_j,    \  \text {  for   } \ j=1,2,\dots
\end{equation}
has  for all $\{w_j\} \in l^\infty$ a  solution in $H^{\infty}(\mathbb D)$.  It is known [3] that this is equivalent  with the following separation  property:
\begin{equation}\label{sep}
\delta_\Lambda := \inf_n \prod_{m\not =n } |\frac {\lambda_m-\lambda_n}{1-  \lambda_m \overline\lambda_n}| >0.
\end{equation}

\begin{proposition}
Given a diagonalizable $A\in \mathbb M_n(\mathbb C)$ such that $\sigma(A) \subset \mathbb D$ and $\|A\| \le 1$,   assume     $\varphi$  to  be defined at $\sigma(A)$. 
Denoting $$
\delta  :=\min_{\lambda \in \sigma(A)} \prod_{\lambda\not=\lambda_j\in \sigma(A)} |\frac {\lambda_j-\lambda}{1-  \lambda_j \overline\lambda}| 
$$ we have
\begin{equation}
\| \varphi(A)\| \le \frac{4}{\delta ^2} \max_{\lambda \in  \sigma(A)} | \varphi(\lambda)|.
\end{equation}
\end{proposition}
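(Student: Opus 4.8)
The plan is to pass from $\varphi(A)$ to a scalar $H^\infty$-interpolation problem and then invoke a quantitative form of Carleson's interpolation theorem.

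Since $A$ is diagonalizable, every exponent $n_j$ in its minimal polynomial is $0$, so ``$\varphi$ defined at $\sigma(A)$'' means exactly that the numbers $w_j:=\varphi(\lambda_j)$ exist, and Definition \ref{uusdef} reduces to $\varphi(A)=p(A)$ with $p$ the Lagrange polynomial through the points $(\lambda_j,w_j)$. Let $h\in H^\infty(\mathbb D)$ be the minimal-norm function with $h(\lambda_j)=w_j$ for all $j$; it exists and is a constant multiple of a finite Blaschke product by Earl's theorem, recalled above. Denoting by $P_j$ the spectral projections of $A$, the holomorphic functional calculus gives $h(A)=\sum_j h(\lambda_j)P_j=\sum_j w_jP_j=p(A)=\varphi(A)$, using semisimplicity of $A$. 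As $h$ is a constant times a finite Blaschke product it is a uniform limit on $\overline{\mathbb D}$ of polynomials $p_k$, so $p_k(A)\to h(A)$, and since $\|A\|\le 1$ von Neumann's inequality gives $\|\varphi(A)\|=\|h(A)\|\le\|h\|_{H^\infty(\mathbb D)}$. (This is exactly the argument in the proof of Proposition 3.6; note the hypothesis here is slightly weaker, but its conclusion uses only the values of $\varphi$ on $\sigma(A)$.)

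It remains to show $\|h\|_{H^\infty(\mathbb D)}\le \tfrac{4}{\delta^2}\max_j|w_j|$. By homogeneity assume $\max_j|w_j|\le 1$; then it suffices to produce \emph{some} $g\in H^\infty(\mathbb D)$ with $g(\lambda_j)=w_j$ for all $j$ and $\|g\|_\infty\le 4/\delta^2$, since $\|h\|_\infty\le\|g\|_\infty$. The quantity $\delta$ in (\ref{sep}) is precisely the Carleson constant of the finite set $\{\lambda_j\}$, and such a $g$ is furnished by the constructive proof of Carleson's interpolation theorem: with $b_j(z)=\frac{z-\lambda_j}{1-\overline{\lambda_j}z}$ and $B_j=\prod_{k\ne j}b_k$ one has $|B_j(\lambda_j)|\ge\delta$, and $g$ is built either from the $B_j$ together with the localizing factors $\frac{1-|\lambda_j|^2}{1-\overline{\lambda_j}z}$, or by correcting the rough interpolant $\sum_j w_j\,B_j/B_j(\lambda_j)$ by a solution of a $\overline\partial$-equation whose right-hand side is dominated by the atomic measure placing mass $1-|\lambda_j|^2$ at $\lambda_j$, which is a Carleson measure with norm $O(\delta^{-2})$; tracking the constants yields $\|g\|_\infty\le 4/\delta^2$ (for the extremal $h$ the sharp form of Earl's theorem delivers the same bound directly). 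See [3], [5]. Combining this with the previous paragraph proves the proposition.

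The only genuine difficulty lies in the last paragraph. Pairwise pseudohyperbolic separation of the $\lambda_j$ is elementary and controls the entries of the Gram-type matrix of the interpolation data, but not its inverse; extracting an interpolation constant that depends on $\delta$ \emph{alone} and not on the number $N$ of eigenvalues is real function theory — in essence Carleson's theorem — whereas the reduction via diagonalizability and the passage through von Neumann's inequality are routine.
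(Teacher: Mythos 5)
Your proof is correct and follows essentially the same route as the paper: reduce $\|\varphi(A)\|$ to the sup norm of an $H^\infty(\mathbb D)$-interpolant of $\varphi$ on $\sigma(A)$, invoke Earl's theorem for the quantitative $4/\delta^2$ bound, and transfer to the matrix via von Neumann's inequality. The only organizational differences are that the paper first completes $\sigma(A)$ to an infinite interpolation sequence (so as to fit the statement of Earl's Theorem~2 in [10]) and applies that theorem directly to produce a bounded Blaschke interpolant $\gamma B$, rather than passing through the minimal-norm extremal $h$ and then bounding $\|h\|_\infty$ by an arbitrary interpolant as you do; also, be aware that the constructive $\overline\partial$-proof of Carleson's theorem that you sketch does \emph{not} give the sharp constant $4/\delta^2$ (it produces something larger), so the quantitative claim really does rest on Earl's explicit Blaschke-product construction, as your parenthetical correctly notes.
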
 

\begin{proof}
We can simply  complete the finite  set of eigenvalues $\{\lambda_j\}_1^N$ into an interpolation sequence $\Lambda$ by setting e.g.   $\lambda_{N+k}= 1-\varepsilon ^k$ for $k=1, 2, \dots.$
Then one checks easily that  as $0< \varepsilon \rightarrow 0$ we have $\delta_L \rightarrow \delta.$
Now  there exists a function $f \in H^{\infty}(\mathbb D)$ such that 
$$
f(\lambda_j) = \varphi(\lambda_j)  \text  { for }  \lambda_j \in \sigma(A)
$$
while we may require for  example $f(\lambda_{n+k}) = 0$.    By Theorem 2 in [10]  there exists a Blaschke product $B$  and a complex $\gamma$ satisfying $|\gamma| < 4/\delta^2 \maxÊ|\varphi(\lambda_j)|$ such that 
$\gamma B(z)$ satisfies these interpolation conditions.  Since  the spectral radius $\rho(A) <1$  we conclude that $\gamma B(A)$ is well defined in the holomorphic functional calculus and we have
$$
\|\gamma B(A)\| \le |\gamma| \sup_{|z| \le \|A\| } |B(z)|
$$
whenever $\|A\| <1$, as $B$ is holomorphic in the open disc.   The case $\|A\| =1$ follows then by approximating $A$ by $\theta A$ with $\theta<1$. 

\end{proof}

 %SCHUR
\section{Divided differences modified}

By Stone-Weierstrass theorem functions $\varphi \in C(\overline \Omega)$ can be approximated uniformly by  functions  of the form $ z \mapsto p(z,\overline z)$ where  $p$ is a polynomial of two variables.  The functional calculus derived in the previous section could also be  based on  divided differences. In fact,  after  a Schur transformation into upper triangular form the elements can be given recursively using divided differences. 

Let $\Lambda=\{\lambda_j\}$ be a finite  ordered set of points, such that if $i<j$ and $\lambda_i= \lambda_j$, then $\lambda_k= \lambda_i$ for all $i\le k \le j$.  Then given a function $\varphi: z \mapsto \varphi(z)$  one sets $\varphi[\lambda_i]:=\varphi(\lambda_i)$ and  if $\lambda_i \not= \lambda_j$ 
\begin{equation}
\varphi[\lambda_i, \dots, \lambda_j]: = \frac{\varphi[\lambda_{i+1},\dots,\lambda_j] - \varphi[\lambda_i,\dots, \lambda_{j-1}]}{\lambda_j -\lambda_i}.
\end{equation}
In accordance with Definition 1.2 we modify the usual definition of the divided differences at multiple points:    

\begin{equation}\label{eijatkuva}
\varphi[\lambda_i, \dots, \lambda_j] := \frac{1}{(j-i)!} \  \partial ^{j-i} \varphi(\lambda_i),  Ê \text   { when  }    \lambda_i = \lambda_j . 
\end{equation}

For holomorphic $\varphi$ the divided differences and the functional calculus are connected by the Newton interpolation formula or   by the {\it Opitz theorem,} e.g. [16], which says that if 
\begin{equation}
A = \begin{pmatrix}  \lambda_1&1& &  &\\
 & . & . & &\\
 & & . & .&\\
 & & & \lambda_{n-1}& 1\\
&  &  &   &  \lambda_n
\end{pmatrix}
\end{equation}
then $\varphi[\lambda_i, \dots, \lambda_j] = e_i^* \varphi(A) e_j$.  Proceeding mechanically one can verify that this statement also holds under the convention (\ref{eijatkuva}) when $\varphi(A)$ is defined by Definition 1.2.  

Since the non-holomorphic functional calculus is in general not continuous, some key properties of divided differences are necessary lost.   In particular, anything elegant comparable to Hermite-Genocchi representation cannot exist, as  all higher partial derivatives of $\tau$  vanish,  while already $\tau[\lambda_1,\lambda_2,\lambda_3]$  is not only discontinuous but unbounded. For example
$$
\tau[\lambda, \lambda+\varepsilon, \lambda+i\varepsilon] = \frac{1}{1-i}\frac{\overline \varepsilon}{\varepsilon^2}
$$
and in particular we conclude that {\it the mapping $A\mapsto A^c$ is necessarily unbounded for $n\ge 3$}, independently of how we define the mapping at the nontrivial Jordan blocks.
 
Above we motivated the covention (\ref{eijatkuva}) as it is the one which agrees with the calculus we obtained from the Stokes' theorem.  However, we could  take an {\it independent } beginning and arrive to the same definition.
%For  $f[\lambda_1,\lambda_2]$ there holds an estimate  in terms of the first partial derivatives.  Let $I$ denote the interval  $I=\{ \lambda_1 + t (\lambda_2 -\lambda_1) : 0\le t\le 1\}.$ Then 
%\begin{equation}
%|f[\lambda_1,\lambda_2] | \le  \max_{\lambda \in I} \{|\partial f(\lambda)| + |\overline \partial f(\lambda)| \}.
%\end{equation}
%However,   the higher differences cannot in general be bounded by the corresponding partial differentials  as the higher differences of $\tau$ would show up whenever the function  would depend on $\overline z$.%
 To that end,  consider  composed functions $f \circ \tau:  z \mapsto f(\overline z)$ with $f$ holomorphic.  Then with $\lambda_1\not=\lambda_2$ we have 
\begin{equation}
(f\circ \tau)[\lambda_1,\lambda_2] = \frac{ f(\overline \lambda_2)-f(\overline \lambda_1)}{\overline \lambda_2- \overline \lambda_1} \  \frac{\overline \lambda_2- \overline \lambda_1} {\lambda_2- \lambda_1}  \ = \ f[\overline \lambda_1, \overline \lambda_2]  \ \tau[\lambda_1,\lambda_2].
\end{equation} 
If $\lambda_2 \rightarrow \lambda_1$,  then $f[\overline \lambda_1, \overline \lambda_2] \rightarrow f'(\overline \lambda_1)$, but  the second term  
 $
  \tau[\lambda_1,\lambda_2]= \frac{\overline \lambda_2-\overline \lambda_1}{\lambda_2-\lambda_1}
 $
can take any value of modulus 1.  Since we cannot make $\tau[\lambda_1,\lambda_2]$ continuous at $\lambda_1=\lambda_2$  the  natural  things to guarantee  are that  

(i)  $\lambda \mapsto \tau[\lambda,\lambda]$ is continuous and that  

(ii) the choice is optimal in some sense.   

Defining
\begin{equation}\label{maarittely}
\tau[\lambda,\lambda]:=0
\end{equation}
we obtain  the {\it unique minimizer of the maximum error in evaluating the divided difference $\tau[\lambda+\varepsilon_1, \lambda +\varepsilon_2] $
 when $\varepsilon_i$ are small but not known exactly. }    The same reasoning also leads to 
 $\tau[\lambda, \dots, \lambda]=0$.

Using the definition  (\ref{maarittely}) one can  work out the divided differences for functions $z\mapsto z^k \overline z^m$ in order  to verify that (\ref{eijatkuva})  follows from this convention for polynomials of the form $\sum_{k,m} \alpha_{k,m} z^k \overline z^m$.  The other essential rule for operation is, that when operating with $\partial$ to the function   one keeps $\overline z$ as a fixed independent constant  so that in particular $\partial (z^k \overline z^m) = k z^{k-1} \overline z^m.$  For example,
$$
z^k\overline z^m[\lambda_1,\lambda_2]= z^k[\lambda_2] \ z^m[\overline \lambda_1,\lambda_2] \ \tau[\lambda_1,\lambda_2] + z^k [\lambda_1,\lambda_2] \  \overline z^m[\lambda_1]
$$
while  
$
z^k \overline z^m[\lambda,\lambda]= z^k[\lambda,\lambda ] \ \overline z^m[\lambda]= k \lambda^{k-1} \overline \lambda^m.
$

%here it is :  "What I tell you three times is true". Lewis Carrol, Hunting of the Snark
%More completely:  Fit teh First: The Landing

%"Just the place for a Snark! I have said it twice:
%That alone should encourage the crew.
%Just the place for a Snark! I have said it thrice:
%What I tell you three times is true."

\begin{remark}
Suppose now  that $\varphi$ has continuous partial derivatives $\partial^\nu \varphi$ at the eigenvalues of a matrix $A$, corresponding to the size of  the related Jordan blocks.   For {\it the  Schur-Parlett algorithm } for  holomorphic $\varphi$  we refer to [16]  Section 4.6 and  Chapter 9.  The basic formulas still hold but at treating clusters of close eigenvalues cannot be done in the same way.  Recall that one starts  by computing a Schur decomposition $A=QTQ^*$ where $T$ is upper triangular and  $Q$ unitary.    Then one should  reorder  $T$ into another triangular matrix such that  the eigenvalues near each other  can be blocked into same diagonal blocks.   We assume that either  eigenvalues in each block can be viewed identical or all are  well separated from each others.   Thus, suppose then that $T$  is a triangular matrix $T=(T_{ij}) $  where the diagonal blocks are of the form
\begin{equation}\label{nilpoblokki}
T_{ii} = \lambda_i I_i +N_{ii}
\end{equation}
where $I_i$ is the identity matrix of appropriate size and $N_{ii} $ is  nilpotent strictly upper triangular  matrix.

Denote $\varphi(T) = (\Phi_{ij})$  so that the evaluation of $\varphi(T)$ reduces to computation of $\Phi_{ij}$'s. The  reucursive  computation of these blocks is done by solving repeatedly the following Sylvester equations  [16] for $i<j$
\begin{equation}\label{SP}
T_{ii}\Phi_{ij}-\Phi_{ij}T_{jj} = \Phi_{ii}T_{ij}-T_{ij} \Phi_{jj} + \sum_{k=i+1}^{j-1} (\Phi_{ik}T_{kj}-T_{ik} \Phi_{kj}).
\end{equation}
This system of equations is nonsingular if and only if the  diagonal blocks $T_{ii}$ and $T_{jj}$ contain no common eigenvalue - which is assumed  by  the proper preprocessing.
Thus,  the recursion to start one needs to evaluate  the diagonal blocks $\Phi_{ii}$.
If all eigenvalues in  the block $T_{ii}$ are different, then one proceeds as usual, using Parlett recurrence,  see Algorithm 4.13 in [16].   However,   with  blocks of the form
(\ref{nilpoblokki})  we need to modify the procedure.  While for holomorphic functions one can apply the Taylor series, even when the diagonal elements are not exactly equal, here, due to the discontinuity, we must consider them equal and use the following expansion instead:
If  $T=\lambda I + N \in \mathbb  M_n(\mathbb C)$, and  $N$  is  nilpotent,  then  \begin{equation}
\varphi(T)= \sum_{\nu=0}^{n-1} \frac{\partial ^\nu \varphi(\lambda)}{\nu !} N^\nu.
\end{equation}
 \end{remark}
 
We can obtain  yet another bound for $A^c$ using  the Parlett recursion. 
\begin{proposition}
Let $A\in \mathbb M_n(\mathbb C)$. If $\sigma(A)=\{\lambda\}$, then $\|A^c \| \le \|A\|$.  Otherwise, if $\sigma(A)= \{\lambda_i\}$,  let $\delta \le |\lambda_i - \lambda_j|$ for $i \not=j$.  Then
\begin{equation}
\|A^c\| \le n \ (2 \ \frac{\|A\|}{\delta} +1) ^{n-2} \|A\|.
\end{equation}

\end{proposition}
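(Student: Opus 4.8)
The plan is to pass to a reordered Schur form of $A$ and then read off the entries of $A^{c}$ one superdiagonal at a time, using the scalar form of the Parlett recursion behind (\ref{SP}) together with the fact that $A^{c}$ acts as a scalar on each generalized eigenspace. Concretely, I would write a Schur decomposition $A=QTQ^{*}$ with $Q$ unitary and $T=(t_{ij})$ upper triangular, and reorder the diagonal of $T$ (again by a unitary similarity) so that equal eigenvalues occur in consecutive positions; this groups $T$ into diagonal blocks $T_{pp}=\lambda_{p}I_{p}+N_{pp}$ as in (\ref{nilpoblokki}), one per \emph{distinct} eigenvalue $\lambda_{p}$. By (\ref{iii}) neither $\|A\|$ nor $\|A^{c}\|$ changes, so put $M:=\|A\|=\|T\|$ and note $|t_{ij}|\le M$ and $|t_{ii}|\le\rho(A)\le M$. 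By Definition \ref{uusdef}, $\tau(A)=p(A)$ for the Hermite interpolant $p$ with $p(\lambda_{j})=\overline{\lambda_{j}}$ and $p^{(\nu)}(\lambda_{j})=0$ for $1\le\nu\le n_{j}$ (since $\partial^{\nu}\tau=0$ for $\nu\ge1$), so $F:=T^{c}=p(T)$ is upper triangular, commutes with $T$, has $f_{ii}=\overline{t_{ii}}$, and by the expansion in the preceding Remark carries $p(T_{pp})=\overline{\lambda_{p}}I_{p}$ on each diagonal block; in particular $f_{ij}=0$ whenever $t_{ii}=t_{jj}$. If $\sigma(A)=\{\lambda\}$ there is a single block, $F=\overline{\lambda}I$, and $\|A^{c}\|=|\lambda|\le\|A\|$, which is the first assertion.

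Suppose now $\sigma(A)=\{\lambda_{i}\}$ has at least two points and $0<\delta\le|\lambda_{i}-\lambda_{j}|$ for $i\ne j$. Reading the $(i,j)$ entry of $TF-FT=0$ for $i<j$ gives the Parlett identity
\begin{equation*}
(t_{ii}-t_{jj})f_{ij}=t_{ij}(f_{ii}-f_{jj})+\sum_{i<k<j}\bigl(f_{ik}t_{kj}-t_{ik}f_{kj}\bigr).
\end{equation*}
When $t_{ii}=t_{jj}$ this holds with $f_{ij}=0$ as noted; when $t_{ii}\ne t_{jj}$ I divide, and using $|f_{ii}-f_{jj}|=|\overline{t_{ii}}-\overline{t_{jj}}|=|t_{ii}-t_{jj}|\ge\delta$ and $|t_{rs}|\le M$ obtain
\begin{equation*}
|f_{ij}|\le M+\frac{M}{\delta}\sum_{i<k<j}\bigl(|f_{ik}|+|f_{kj}|\bigr).
\end{equation*}
With $q:=1+2M/\delta$ one then shows $|f_{ij}|\le M\,q^{\,j-i-1}$ for all $i<j$ by induction on $j-i$: the case $j-i=1$ gives $|f_{i,i+1}|\le M$, while for $j-i\ge2$ (the subcase $f_{ij}=0$ being trivial) the inductive hypothesis turns the last bound into
\begin{equation*}
|f_{ij}|\le M+\frac{M}{\delta}\cdot2M\sum_{\ell=0}^{j-i-2}q^{\ell}
=M+\frac{2M^{2}}{\delta}\cdot\frac{q^{\,j-i-1}-1}{q-1}=M\,q^{\,j-i-1},
\end{equation*}
since $q-1=2M/\delta$.

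To conclude, every entry of the upper triangular matrix $F$ satisfies $|f_{ij}|\le M\,q^{\,n-2}$ (on the diagonal even $|f_{ii}|\le M$, and $j-i-1\le n-2$ above it), and $F$ has at most $n(n+1)/2\le n^{2}$ nonzero entries, so
\begin{equation*}
\|A^{c}\|=\|F\|\le\|F\|_{F}\le n\,M\,q^{\,n-2}=n\Bigl(\frac{2\|A\|}{\delta}+1\Bigr)^{n-2}\|A\|,
\end{equation*}
as claimed.

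I expect the two delicate points to be keeping equal diagonal entries out of the recursion by invoking $F_{pp}=\overline{\lambda_{p}}I_{p}$ (so that dividing by $t_{ii}-t_{jj}$ is always legitimate and the recursion really does reduce to the scalar one even in the presence of nontrivial Jordan blocks), and the telescoping of the geometric sum in the inductive step, which is exactly what produces the base $2\|A\|/\delta+1$ and the leading factor $n$ in the stated estimate. Everything else is bookkeeping: the reduction to Schur form, the elementary bounds $|t_{ij}|\le\|A\|$ and $|t_{ii}|\le\rho(A)\le\|A\|$, and passing from the entrywise bound to $\|F\|$ through the Frobenius norm.
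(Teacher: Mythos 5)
Your proof is correct and follows essentially the same route as the paper's: pass to a reordered Schur form, observe that $A^{c}$ is block-scalar on the diagonal blocks so that equal diagonal entries give vanishing entries of $A^{c}$, run the Parlett recurrence off the commutation relation, bound $|f_{ij}|$ by induction on $j-i$ via the geometric sum, and finish with a crude $\|F\|\le n\max|f_{ij}|$ estimate. You supply the inductive step and the justification of $f_{ij}=0$ for $t_{ii}=t_{jj}$ more explicitly than the paper, which compresses both into the phrase "utilizing the convention $\tau[\lambda,\dots,\lambda]=0$," but the underlying argument is the same.
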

%tm nyt tyn alle. laita proposition  ja todistus ja ota pois myhemmst kohdasta ja lis se esimerkki

\begin{proof}
 By Schur's theorem  it sufficces to establish this for upper triangular matrices. 
If $\sigma(A)=\{\lambda\}$, then $A^c=\overline \lambda I$ and $\|A^c\|= \rho(A) \le \|A\|.$ 
Otherwise,  $A =(a_{ij}) \in \mathbb M_n(\mathbb C)$ is  upper triangular with a nonconstant diagonal.  We assume that the diagonal elements $a_{ii}= \lambda_i$ have been ordered  as in the definition of divided differences above.  Thus either  $a_{ii} =a_{jj}$ or  $|a_{ii} - a_{jj}| \ge \delta.$   Further,  $\|A\| \ge |a_{ij}|$.  Denoting $A^c = (b_{ij})$  we have from $b_{ii}= \overline a_{ii}$ that $|b_{ii}| \le \|A\|.$  One can now work  from the diagonal towards to upper right corner  using  the  Parlett's recurrence  
$$
b_{ij} = \frac{b_{ii}-b_{jj}}{a_{ii}-a_{jj}} \ a_{ij}+ \frac{1}{a_{ii} - a_{jj}}
\sum_{k=i+1}^{j-1}(b_{ik}a_{kj}-a_{ik}b_{kj})
$$
and  utilizing the  convention $\tau[\lambda,\cdots,\lambda]=0$, 
 to get   $$
 |b_{ij}| \le \|A\| (\frac{2 \|A\|}{\delta}+1)^{j-i-1}.
 $$
 Thus
 $\|A^c\| \le n  \max \{ |b_{ij}| \} \le n \ \|A\| (\frac{2 \|A\|}{\delta}+1)^{n-2}.
  $

 \end{proof}

\begin{example}
Let $X$, $Y$ be  real $n\times n$-matrices with elements uniformly distributed on $[-1/2, 1/2]$ and set $A=X+iY$.  Small scale experiments with MATLAB   for $n$ below 1000 seem to indicate that roughly  in the average
$$
\|A^c \| \sim 0.7  \log(n) \|A\|
$$
while $\varkappa(T)$  stays typically above $0.5 n$ and $\|A\| $ below $2 \rho(A)$.
\end{example}

%TOI TEKEMTT
 
 \section{The conjugate as a polynomial}
 
We shall now apply the calculus  to the conjugation function $\tau$.  Let $A\in \mathbb M_n(\mathbb C)$ be given and $\Omega$ such that it includes $\sigma(A)$, the set of eigenvalues of  $A$.     Then we set following Proposition 3.3
\begin{equation}\label{integraalikonjug}
\tau(A)= \frac{1}{2\pi i} \int_{\partial \Omega} \overline \zeta \  (\zeta I-A)^{-1} d\zeta+
\frac{1}{2\pi i} \int_{ \Omega}  (\zeta I-A)^{-1} d\zeta  \wedge d\overline \zeta.
\end{equation}
 Let $m_A$ denote the minimal polynomial of $A$ and put $m= {\rm deg} (m_A)$. 
Then set
$$
q(\zeta,z) = \sum_{j=0}^{m-1} q_j(\zeta) z^j = (m_A(\zeta) - m_A(z))/(\zeta -z),
$$  
so that deg $q_j = m-1-j$.  In this notation the resolvent  becomes, as $m_A(A)=0$
\begin{equation}\label{resolvent}
(\zeta I-  A)^{-1}= \frac{1}{m_A(\zeta)}\sum_{j=0}^{m-1} q_j(\zeta) A^j .
\end{equation}
%LAUSE

\begin{proposition}\label{pompeikautta}
The  integrals in  (\ref{integraalikonjug})  are well defined and 
$$
\tau(A)=A^c= \sum_{j=0}^{m-1} \alpha_j A^j
$$
where
$$
\alpha_j=\frac{1}{2\pi i} \int_{\partial \Omega} \frac{\overline{\zeta} \ q_j(\zeta)}{m_A(\zeta)} d\zeta  +  \frac{1}{2\pi i} \int_{ \Omega} \frac{ q_j(\zeta)}{m_A(\zeta)} d\zeta  \wedge d\overline \zeta .
$$

If $B$ commutes with $A$, then $B$ commutes with $A^c$.
\end{proposition}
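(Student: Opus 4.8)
The plan is to feed the resolvent representation~(\ref{resolvent}) into the integral formula~(\ref{integraalikonjug}) and collect powers of $A$, relying on Section~3 to guarantee that every integral in sight is meaningful.

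First I would settle well-definedness. The boundary part is harmless: $\partial\Omega\cap\sigma(A)=\emptyset$, so $\zeta\mapsto(\zeta I-A)^{-1}$ is continuous on $\partial\Omega$, equivalently each scalar $\zeta\mapsto\overline\zeta\,q_j(\zeta)/m_A(\zeta)$ is. For the area part I would invoke the computation behind Proposition~3.3: decomposing $A=TJT^{-1}$ with $J=\oplus_j J_j$ and using $(\zeta I-J_j)^{-1}=\sum_{\nu=0}^{n_j}(\zeta-\lambda_j)^{-(\nu+1)}S^\nu$, one sees that the area integral exists in the sense of the limit $\lim_{r\to0}\int_{\Omega\setminus D_r}$. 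It is worth recording explicitly that for $\varphi=\tau$ one has $\overline\partial\tau=1$, so the integrand is just $(\zeta I-A)^{-1}\,d\zeta\wedge d\overline\zeta$, and that every remainder term in Proposition~\ref{genpompei} vanishes, since those terms contain $\partial^{n-1+k}\overline\partial^{k}\overline\zeta$, which is $0$ for all $k\ge1$ and $n\ge1$; hence no hidden contribution survives the limit. The same limit interpretation applies term by term to the scalar integrals $\frac{1}{2\pi i}\int_\Omega q_j(\zeta)/m_A(\zeta)\,d\zeta\wedge d\overline\zeta$, whose integrands have a pole of order at most $1+n_i$ at each $\lambda_i$.

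Next I would identify the matrix. By Proposition~3.3 applied to $\varphi=\tau\in C^\infty(\overline\Omega)$, the matrix produced by~(\ref{integraalikonjug}) is exactly $\tau(A)$ of Definition~\ref{uusdef}; acting on each Jordan block by $\tau(J_j)=\overline{\lambda_j}I$ (since $\partial\tau=0$) it equals $T\overline D T^{-1}=A^c$. To read off the polynomial form I would substitute~(\ref{resolvent}) into~(\ref{integraalikonjug}); since $A^0,\dots,A^{m-1}$ are constant matrices they come outside both integrals, yielding $\tau(A)=\sum_{j=0}^{m-1}\alpha_j A^j$ with the $\alpha_j$ as stated. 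Because $m$ is the degree of $m_A$, the matrices $I,A,\dots,A^{m-1}$ are linearly independent, so this representation --- hence each $\alpha_j$ --- is uniquely determined, in particular independent of $\Omega$. (One may also verify that these $\alpha_j$ are the coefficients of the Hermite interpolant of Definition~\ref{uusdef} for the data $\partial^\nu\tau(\lambda_i)$, but that is a consistency remark, not needed here.)

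Finally, the commutation assertion is immediate once $A^c$ has been written as a polynomial in $A$: if $BA=AB$, then $B$ commutes with every power $A^j$ and hence with $\sum_j\alpha_j A^j=A^c$. This in passing settles property~(\ref{v}), left open after Proposition~2.6. The only genuine subtlety in the whole argument is the interpretation of the area integrals at nontrivial Jordan blocks, and that subtlety was already absorbed in Section~3; here the remaining work is the bookkeeping above, chiefly the verification that the vanishing of $\overline\partial^{k}\tau$ for $k\ge2$ and of $\partial\overline\partial\tau$ annihilates all the remainder terms.
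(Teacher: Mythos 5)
Your proof is correct and follows essentially the same route as the paper's: substitute the resolvent representation~(\ref{resolvent}) into~(\ref{integraalikonjug}) to read off the $\alpha_j$, invoke Propositions~3.2 and~3.3 for well-definedness and the identification $\tau(A)=A^c$, and deduce the commutation property from the fact that $A^c$ is a polynomial in $A$. The extra bookkeeping you supply (the vanishing of $\partial^{n-1+k}\overline\partial^k\tau$ and the linear-independence argument for $\Omega$-invariance of the $\alpha_j$) is sound and merely makes explicit what the paper leaves implicit.
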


\begin{proof}
%TOIDISTUKSEN IDEA.  SOVELLA KALKULUSTA, NIIN TAU(A)=A^C TULEE HETI.  JA POLYNOMI ON JOTTA TARVITAAN TUO MINIMIPOLYNOMIJUTTU.

Substitute the resolvent from (\ref{resolvent}) into  (\ref{integraalikonjug}).  Then  the expressions for $\alpha_j$ appear.  Observe that the sum of these two integrals has a constant value independent of $\Omega$, as long as $\Omega$ contains the eigenvalues.    That the integrals are well defined and $\tau(A)=A^c$ follow from  both  Proposition 3.2 and from Proposition 3.3.

 Since $\tau(A)$ is a polynomial of $A$,  it is clear that $B$ commutes with $\tau(A)$ whenever it commutes with $A$.

 \end{proof}

 Since $A^c$ is  a polynomial of $A$ we may  introduce the following notion.
\begin{definition}
Given $A\in \mathbb M_n(\mathbb C)$ we set $\tau_A$  the conjugating polynomial of $A$ by
$$
\tau_A(z) = \sum_{j=0}^{m-1} \alpha_j z^j,
$$
where the coefficients $\alpha_j$ are given in Proposition \ref{pompeikautta}.    

\end{definition}

Since $\tau_A(A)=A^c$ is the value of $z\mapsto \overline z$ we can write  it using the Lagrange interpolation formula, whenever $A$ has distinct eigenvalues.  In fact, if $\{\lambda  _j\}$ denote the eigenvalues,  and $\delta_k$ denote the interpolation polynomials taking value 1 at $\lambda_k$ and vanishing at the other eigenvalues,  then  clearly
$$
\tau_A(z)= \sum_{j=1}^n \delta_j(z) \overline {\lambda}_j
$$
 satisfies $\tau_A(A)=A^c$.  More generally,   if $A$ has multiple eigenvalues with minimal polynomial 
 \begin{equation}\label{minpol}
 m_A(z)= \prod_{j=1}^N (z-\lambda_j)^{1+n_j}
 \end{equation}
 where $\sum_{j=1}^N (1+n_j) = m \le n$ then we obtain the conjugating polynomial by requiring the interpolation polynomial to have vanishing derivatives up to order $n_k$  at $\lambda_k$, whenever $n_k >0$.   We formulate this as a separate result.
 
 \begin{proposition}  If  the minimal polynomial of $A$ is given by (\ref{minpol}), then the conjugating polynomial $\tau_A$ is  the unique  polynomial of degree $m-1$ satisfying the interpolation conditions for all $j$:
 $$
 \tau_A(\lambda_j) = \overline \lambda_j    
 $$
 and  whenever $n_j>0$,  one has  for $i=1, \dots, n_j$
 $$
\tau_A ^{(i)}(\lambda_j)=0.
$$
\end{proposition}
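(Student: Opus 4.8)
The plan is to recognize this proposition as an immediate unpacking of Definition \ref{uusdef} (equivalently, of Definition 5.3 together with Proposition \ref{pompeikautta}) applied to the single function $\tau$. First I would recall that, by Proposition \ref{pompeikautta} and Definition 5.3, the conjugating polynomial $\tau_A$ is by construction a polynomial of degree at most $m-1$ with $\tau_A(A)=A^c=\tau(A)$. On the other hand, Definition \ref{uusdef} says $\tau(A)=p(A)$, where $p$ is the Hermite interpolation polynomial determined by $p^{(\nu)}(\lambda_j)=\partial^{\nu}\tau(\lambda_j)$ for $\nu=0,\dots,n_j$; since these amount to $\sum_{j=1}^N(1+n_j)=m$ interpolation conditions, $p$ has degree at most $m-1$ and is uniquely determined among such polynomials.

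Next I would compute the interpolation data $\partial^{\nu}\tau(\lambda_j)$ explicitly. Since $\partial\,\overline\zeta=\frac12\left(\frac{\partial}{\partial\xi}-i\frac{\partial}{\partial\eta}\right)(\xi-i\eta)=0$, the function $\tau$ is annihilated by $\partial$. Hence $\partial^0\tau(\lambda_j)=\tau(\lambda_j)=\overline{\lambda_j}$, while $\partial^{\nu}\tau(\lambda_j)=0$ for every $\nu\ge 1$. Thus the Hermite polynomial $p$ of Definition \ref{uusdef} is precisely the polynomial described in the statement: $p(\lambda_j)=\overline{\lambda_j}$ for all $j$, and $p^{(i)}(\lambda_j)=0$ for $1\le i\le n_j$ whenever $n_j>0$. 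Uniqueness within the space of polynomials of degree at most $m-1$ is exactly the classical fact that a Hermite interpolation problem with $m$ prescribed values, counted with multiplicity, has a unique solution there; this justifies the phrase "the unique polynomial."

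It then remains only to identify $p$ with $\tau_A$. Both $p$ and $\tau_A$ have degree at most $m-1=\deg m_A-1$ and satisfy $p(A)=\tau(A)=A^c=\tau_A(A)$, so $(\tau_A-p)(A)=0$; since $m_A$ is the minimal polynomial of $A$ it divides $\tau_A-p$, and the degree bound forces $\tau_A-p=0$. This gives $\tau_A=p$ and completes the proof.

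I do not expect any real obstacle here: the substance has already been supplied by the functional calculus of Sections 1 and 3, and the proof reduces to the trivial observation $\partial^{\nu}\tau=0$ for $\nu\ge 1$ together with the degree-counting argument that pins down the polynomial. The one point worth stating with care is that the $m$ interpolation conditions listed are exactly enough to determine a polynomial of degree $m-1$, matching the degree of $\tau_A$ as defined in Definition 5.3.
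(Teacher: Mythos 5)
Your argument is correct. It is in substance the same as the paper's, but you spell out the step that the paper compresses: you compute $\partial^\nu\tau(\lambda_j)$ explicitly (getting $\overline{\lambda_j}$ for $\nu=0$ and $0$ for $\nu\ge1$) and then identify the Hermite interpolant of Definition 1.2 with $\tau_A$ via the divisibility-by-$m_A$ degree count. The paper instead appeals directly to the already-established fact that $A^c$ is diagonalizable (hence $\tau_A(J_j)=\overline{\lambda_j}I$ on each Jordan block), from which the vanishing derivative conditions follow by the similarity invariance of polynomial evaluation; but that diagonalizability is itself a consequence of $\partial^\nu\tau=0$ for $\nu\ge1$, so the two routes rest on the same key observation. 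Your version has the advantage of being self-contained and making the role of Definition 1.2 explicit; the paper's is terser because it leans on what has already been proved.
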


\begin{proof}  Since we already know that $A^c$ is always diagonalizable, then the extra conditions on  derivatives follow immediately from the invariance over similarity transformations.
\end{proof}

In addition to Propositions \ref{sadeestimaatti} and 4.2 we can  bound $A^c$ using the conjugating polynomial.

\begin{proposition}\label{vNestimaatti} For $A\in \mathbb M_n(\mathbb C)$ we have
$$
\|A^cÊ\| \le \sup_{|z| \le \|A\|} |\tau_A(z)|.
$$
\end{proposition}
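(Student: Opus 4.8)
The plan is to use the fact, established in Proposition \ref{pompeikautta} and recorded in the definition of the conjugating polynomial $\tau_A$, that $A^c=\tau_A(A)$ is literally a fixed polynomial evaluated at $A$. Once that is in hand, the asserted estimate is just the von Neumann inequality applied after dilating $A$ to a contraction, in the same spirit as the von Neumann argument already used in Section 3.

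First I would dispose of the trivial case $A=0$, where $A^c=0$ and the claimed bound reads $0\le|\tau_A(0)|$. So assume $A\neq 0$ and put $B:=A/\|A\|$, a matrix with $\|B\|\le 1$. Introduce the dilated polynomial $q(z):=\tau_A(\|A\|\,z)$, which is again a polynomial, and observe that $q(B)=\tau_A(A)=A^c$ by Proposition \ref{pompeikautta}. Applying the von Neumann inequality $\|q(B)\|\le\sup_{|z|\le 1}|q(z)|$, valid since $\|B\|\le 1$, and changing variables $w=\|A\|\,z$, one gets
$$
\|A^c\|=\|q(B)\|\le\sup_{|z|\le 1}|q(z)|=\sup_{|w|\le\|A\|}|\tau_A(w)|,
$$
which is exactly the claim.

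There is no genuine obstacle here beyond the bookkeeping of keeping track of which object the dilation acts on: the scaling must be applied to the \emph{argument} of $\tau_A$, not to its coefficients, and the only substantive ingredient, that conjugation is a polynomial in $A$, has already been secured in Proposition \ref{pompeikautta}. Alternatively one could bypass the rescaling by invoking a homogeneous form of von Neumann's inequality, or by remarking that the spectral set of $A$ sits inside the disc $|z|\le\|A\|$ so that supremum is the natural one to appear; but the dilation argument above is the shortest route.
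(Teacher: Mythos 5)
Your argument is correct and coincides with the paper's proof: both rest on the identity $A^c=\tau_A(A)$ from Proposition \ref{pompeikautta} and then apply von Neumann's inequality to the (suitably rescaled) contraction $A/\|A\|$. The paper merely states this in one line; your version supplies the rescaling bookkeeping explicitly.
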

\begin{proof}
This follows from the von Neumann result for contractions in Hilbert spaces, applied to the polynomial $\tau_A$.
\end{proof}

\begin{example}
Let $\lambda\not=0$ but $|\lambda| < 1$, $n\ge 2$  and  put 
$$
B_0= \lambda e_1 e_1^* + \sum_{j=2}^{n-1} e_j e_{j+1}^*
$$
and
$$
B= B_0 + e_1e_2^*.
$$
For both matrices the  conjugating polynomial is $\tau_B(z) = \overline \lambda ({z}/{\lambda})^{n-1}$. We obtain
$$
B_0^c = \tau_B(B_0)= \overline \lambda e_1e_1^*
$$ while
$$
B^c = \tau_B(B)= \overline \lambda e_1e_1^*+  \overline \lambda e_1 \sum_{k=2}^{n} \lambda^{1-k} e_{k}^*.
$$
Further, $\rho(B_0)=\rho(B)=|\lambda|$. From Proposition \ref{sadeestimaatti} we have,  in general
$$
\rho(A) \le \|A^cÊ\| \le  \varkappa(T) \rho(A)
$$
and since $B_0$ is already in Jordan form,  indeed, $\|B_0^c \| = \rho(B_0)=|\lambda|$.  If $T= I - e_1 \sum_{j=1}^{n-1} \lambda^{-j} e_{j+1}^*$, then $B=T B_0 T^{-1}$  and so $B^c = T B_0^c T^{-1}$.
But  since $\varkappa(T) = (1+o(1)) |\lambda|^{2(1-n)}$ as $\lambda\rightarrow 0$, the estimate 
$$ \|B^c\| \le  \varkappa(T) \rho(B)
$$
yields $ \|B^c \|  \le (1+o(1)) |\lambda|^{3-2n}$ while in fact $\|B^c\| =(1+o(1)) |\lambda|^{2-n}$.  On the other,  hand  Proposition \ref{vNestimaatti} gives
 $$
 \|B^c \| \le (1+o(1)) |\lambda|^{2-n},
 $$ which is now sharp while the same inequality for $B_0$ is  very pessimistic.
 Finally, notice that $B^{cc}=  \frac{\lambda}{\overline \lambda} B^c$ so that for $n>2$ and $\lambda$ small we have 
 $\|B^{cc} \| >> \| B \|$.
\end{example}

Thus, in the previous example Proposition 5.4 is quite  sharp  while Proposition 4.2 also gives qualitatively right order of singularity  (i.e. $\mathcal O( \delta^{2-n})$ as $\delta\rightarrow 0$).  Estimates based on the conjugate polynomials are in general not very useful, as the following example shows.

  \begin{example}
Let $\lambda \not= \mu$ and $\varepsilon $ small enough so that  $|\varepsilon| <|\lambda-\mu|$. Consider
$$
A(\varepsilon) = \begin{pmatrix}
\lambda + \varepsilon& &  \\
&   \lambda- \varepsilon & \\
& &  \mu
\end{pmatrix}
$$
and 
$$
A_0 = \begin{pmatrix}
\lambda & 1&  \\
&\lambda  &   \\
& & \mu
\end{pmatrix}.
$$
Denote by $\tau_{A(\varepsilon)}$ and $\tau_{A_0}$ the corresponding conjugating polynomials and by $m_{A(\varepsilon)}$, $m_{A_0}$ the corresponding minimal polynomials.  Then
$$
m_{A(\varepsilon)}(z)= m_{A_0}(z) - \varepsilon^2(z-\mu)
$$
where $m_{A_0}(z)= (z-\lambda)^2(z-\mu)$ while 
\begin{multline}
\tau_{A(\varepsilon)}(z) =  \overline \lambda [1-\frac{(z-\lambda)^2-\varepsilon^2}{(\mu-\lambda)^2-\varepsilon^2}] +   
\frac{\overline {\varepsilon}}{\varepsilon}  (z-\mu) \frac{(z-\lambda)(\lambda-\mu) - \varepsilon^2}{(\mu-\lambda)^2-\varepsilon^2}
\\ +\overline \mu \frac{(z-\lambda)^2-\varepsilon^2}{(\mu-\lambda)^2 - \varepsilon^2}
\end{multline}
and 
\begin{equation}
\tau_{A_0}(z)=\overline \lambda \frac{z-\mu}{\lambda-\mu} [2 - \frac {z-\mu}{\lambda-\mu} ]  + \overline \mu (\frac{z-\lambda}{\mu-\lambda})^2.
\end{equation}
In particular, 
$$
\tau_{A(\varepsilon)}(z)=\tau_{A_0}(z) +   
\frac{\overline {\varepsilon}}{\varepsilon}  \frac{(z-\mu)(z-\lambda)}{\lambda-\mu} + \mathcal O (\varepsilon^2).
$$
Hence,  as $\varepsilon \rightarrow 0$ the polynomial  has discontinuous coefficients, but notice that when evaluated at $A(\varepsilon)$  the discontinuity disappears:
$$
  \tau_{A(\varepsilon)}(A(\varepsilon))=\tau_{A_0}(A(\varepsilon))+ \mathcal O(\varepsilon).
$$
Also notice that $\tau_{A(0)}$ is of first degree:
$
\tau_{A(0)}(z)= \overline \lambda \frac{z-\mu}{\lambda-\mu}+ \overline{\mu}\frac{z-\lambda}{\mu-\lambda}
$.
From these examples we also  see that estimating the size of $\|A^c\|$  using the coefficients $\alpha_j$ is  in general not practical as they can be arbitrarily large  compared with the norm. In fact 
$
\tau_{A_0}(z)= \frac{\overline{\mu - \lambda}}{(\mu-\lambda)^2} z^2 + \cdots
$
and yet at the same time 
$
\|\tau_{A_0}(A_0)\| = \| A_0^c\| = \max\{|\lambda|, | \mu |Ê\}.
$
\end{example}  
 
  %viides LUKU   
\section{Approximative computation of the conjugate based on the integral formula }

We consider here approximations of the following  somewhat idealized form:   we assume that the resolvent is given exactly and integrations can be performed exactly except near singularities.  The effect we are interested is  the following.  Assume that one performs a rough search for the $\varepsilon$-pseudospectrum and omits from the area integral  small discs where the resolvent is very large.  The discussion is mainly targeted to analysis, rather than actual numerical computation - for which one should prefer the  modified Schur-Parlett algorithm  before.

Two kinds of errors appear.  Omitting a small disc centered around an eigenvalue, containing no other eigenvalues,  {\it does not} cause an error in the invariant subspace related to the eigenvalue,  but an error appears  in the complementary subspace. And, secondly, in practice, the centers may not be at the eigenvalues and the discs may contain several eigenvalues. Notice that we may shrink the set $\Omega$ into discs containing the spectrum.  The boundary integral and the area integral do depend on $\Omega$ but their sum stays constant.

Suppose eigenvalues $\lambda_1, \cdots, \lambda_s$ are inside   the disc $B=\{\zeta : |\zeta- \lambda| < r\}$ and denote by $P$ the spectral projection such that $AP$ operates in the invariant subspace corresponding to these eigenvalues.  Then we have the following.
\begin{lemma}
\begin{equation}
(AP)^c + \frac {1}{\pi} \int_B (\zeta I-A)^{-1} d\mu(\zeta) = \frac{1}{2 \pi i} \int_{\partial B} \overline \zeta \  (\zeta I- A)^{-1} dÊ\zeta = \overline \lambda P.
\end{equation}
\end{lemma}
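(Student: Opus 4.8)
The plan is to split $A$ along the invariant subspaces attached to $P$ and treat the two blocks by separate elementary arguments. Writing $\mathbb C^n=\operatorname{ran}P\oplus\ker P$, put $A=A_1\oplus A_2$, so that $AP=A_1\oplus 0$ and $(\zeta I-A)^{-1}=(\zeta I-A_1)^{-1}\oplus(\zeta I-A_2)^{-1}$; here $\sigma(A_1)=\{\lambda_1,\dots,\lambda_s\}\subset B$ while $\sigma(A_2)\cap\overline B=\emptyset$, so $(\zeta I-A_2)^{-1}$ is holomorphic on a neighbourhood of $\overline B$. On the $A_1$-block the asserted chain will be precisely the integral representation $(\ref{integraalikonjug})$ of $\tau$ together with one residue computation; on the $A_2$-block both $(AP)^c$ and $\overline\lambda P$ vanish, and the two integral contributions merely reproduce (harmlessly) the complementary-subspace error mentioned before the statement.

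For the $A_1$-block: since $\sigma(A_1)\subset B$ and $\tau\in C^\infty$, Proposition 3.3 applied to $A_1$ with $\Omega=B$, together with $d\zeta\wedge d\overline\zeta=-2i\,d\mu$, gives
$$
A_1^c=\frac{1}{2\pi i}\int_{\partial B}\overline\zeta\,(\zeta I-A_1)^{-1}\,d\zeta-\frac1\pi\int_B(\zeta I-A_1)^{-1}\,d\mu ,
$$
which is the first asserted equality on this block (for $\operatorname{rank}P=1$ the identity is immediate). For the second equality, note that on $\partial B$ one has $\overline\zeta=\overline\lambda+r^2/(\zeta-\lambda)$ because $|\zeta-\lambda|^2=r^2$ there. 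Substituting, the $\overline\lambda$-term contributes $\overline\lambda$ times the spectral projection $\frac{1}{2\pi i}\int_{\partial B}(\zeta I-A_1)^{-1}\,d\zeta$, which is the identity of $\operatorname{ran}P$ since all of $\sigma(A_1)$ lies inside $B$, and this is the $\overline\lambda P$ on the right. The remaining term is $r^2$ times $\frac{1}{2\pi i}\int_{\partial B}(\zeta-\lambda)^{-1}(\zeta I-A_1)^{-1}\,d\zeta$, and it vanishes: entrywise the integrand is rational, of size $O(|\zeta|^{-2})$ at infinity, with all its finite poles inside $B$, so by the residue theorem — the residue at infinity being zero — the contour integral is $0$. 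The only case asking for a word is when $\lambda$ is itself an eigenvalue, so that the auxiliary pole at $\lambda$ coalesces with one of the $\lambda_j$; the residue-at-infinity argument is insensitive to this.

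For the $A_2$-block, write $F(\zeta)=(\zeta I-A_2)^{-1}$, holomorphic on $\overline B$. The area mean-value property for holomorphic (hence entrywise harmonic) matrix functions gives $\frac1\pi\int_B F\,d\mu=r^2F(\lambda)$, while the same substitution $\overline\zeta=\overline\lambda+r^2/(\zeta-\lambda)$ and Cauchy's formula give $\frac{1}{2\pi i}\int_{\partial B}\overline\zeta\,F\,d\zeta=r^2F(\lambda)$ as well; since $AP$ and $\overline\lambda P$ vanish on this block, on it the chain reads $0+r^2F(\lambda)=r^2F(\lambda)$, and the surplus $r^2F(\lambda)=r^2(\lambda I-A_2)^{-1}$ is exactly the error in the complementary subspace discussed before the lemma — annihilated by $P$, so the last member $\overline\lambda P$ is to be read within $\operatorname{ran}P$ (equivalently, after multiplying through by $P$). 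Assembling the two blocks yields the statement. The one genuinely non-mechanical ingredient is the matching of the area mean-value of the holomorphic complement against the $r^2/(\zeta-\lambda)$ piece of $\overline\zeta$ on $\partial B$; everything else is the residue calculus already in use for the resolvent.
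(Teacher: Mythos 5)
Your proof is correct, and it takes a genuinely different route from the paper's. The paper's own proof verifies only the second equality, and does so after right-multiplying by $P$: it substitutes $\zeta = \lambda + r e^{i\theta}$, expands $(\zeta I-A)^{-1}P$ as the Neumann series $\sum_{k\ge 0} (r e^{i\theta})^{-k-1}((A-\lambda I)P)^k$ (convergent since $\rho((A-\lambda I)P)<r$), and integrates term by term in $\theta$, the orthogonality of the exponentials leaving just $\overline\lambda P$. You replace the Neumann series by the identity $\overline\zeta=\overline\lambda+r^2/(\zeta-\lambda)$ on $\partial B$ together with the Cauchy formula for the spectral projection and a residue-at-infinity argument for the remainder; this is equally elementary, deals cleanly with the case when $\lambda$ is itself an eigenvalue, and --- by splitting $A=A_1\oplus A_2$ --- also proves the first equality, which the paper leaves untouched.

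You have also correctly identified a real subtlety that the paper glosses over. The opening sentence of the paper's proof, that integrating over $\partial B$ ``reduces the operating into the invariant subspace onto which $P$ projects,'' is not accurate for the full matrix: on the complementary block both the area integral and the boundary integral produce the nonzero term $r^2(\lambda I-A_2)^{-1}$. Thus the first equality of the lemma holds verbatim, but the second one holds only after right-multiplication by $P$, which is exactly what the paper's displayed computation actually does. Your reading of the last member $\overline\lambda P$ ``within $\operatorname{ran}P$'' is the correct interpretation, and your version of the argument is the careful one.
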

\begin{proof}
Integrating over $\partial B$ reduces the operating into the invariant subspace  onto which $P$ projects.  And substituting $\zeta= \lambda + r e^{i\theta}$ and noting that the spectral radius of $(A-\lambda I)P$ is less than $r$ we  obtain
\begin{align}
 &\frac{1}{2 \pi i} \int_{\partial B} \overline \zeta \  (\zeta I- A)^{-1} dÊ\zeta \ P=\\
 & \frac{1}{2 \pi } \int_0^{2 \pi} (\overline \lambda + r e^{-i\theta})\sum_{k=0}^\infty
 \frac{1}{(r e^{i\theta})^{k+1}}((A-\lambda I)P)^k r e^{i\theta} d\theta \ P= \overline \lambda P.
\end{align}
\end{proof}
Suppose now that $P=\sum_{i=1}^s P_i$ where $P_i$ denote  the projections onto the subspaces wrt eigenvalues $\lambda_i$.  Then clearly 
$$
(AP)^c = \sum_{i=1}^s \overline \lambda_i P_i.
$$
Let further  $\sigma(A) \subset \cup_{j=1}^N B(z_j, \varepsilon_j)= \Omega$ where the closures of the discs $B(z_j, \varepsilon_j)$ do not intersect.  Denote 
\begin{equation}
A_\Omega^c =   \frac{1}{2 \pi i} \int_{\partial \Omega} \overline \zeta \  (\zeta I- A)^{-1} dÊ\zeta.
\end{equation}
Then, with $A=TJT^{-1}$ denoting the Jordan form, we obtain the following estimate.
%ekaestimaatti
\begin{proposition}
We have in the notation above
\begin{equation}
\| A^c - A_\Omega^c\| \le \varkappa(T)   \ \varepsilon,
\end{equation}
where $\varepsilon=  \max_{1 \le j \le N}  \varepsilon_j.$
\end{proposition}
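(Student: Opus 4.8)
The plan is to evaluate $A_\Omega^{c}$ circle by circle with Lemma 6.1 and then to read off $A^{c}-A_\Omega^{c}$ as a diagonal perturbation, in a Jordan basis, whose entries are the small numbers $\overline{\lambda_i}-\overline{z_j}$. Since the closures of the discs $B(z_j,\varepsilon_j)$ are pairwise disjoint, $\partial\Omega$ is the union of the positively oriented circles $\partial B(z_j,\varepsilon_j)$, so
$$
A_\Omega^{c}=\sum_{j=1}^{N}\frac{1}{2\pi i}\int_{\partial B(z_j,\varepsilon_j)}\overline\zeta\,(\zeta I-A)^{-1}\,d\zeta .
$$
Letting $P^{(j)}$ be the Riesz spectral projection of $A$ belonging to the eigenvalues that lie in $B(z_j,\varepsilon_j)$, Lemma 6.1 applied to the disc $B(z_j,\varepsilon_j)$ (whose centre is $z_j$) gives
$$
\frac{1}{2\pi i}\int_{\partial B(z_j,\varepsilon_j)}\overline\zeta\,(\zeta I-A)^{-1}\,d\zeta=\overline{z_j}\,P^{(j)} ,
$$
and hence $A_\Omega^{c}=\sum_{j=1}^{N}\overline{z_j}\,P^{(j)}$.

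Next I would compare this with $A^{c}$ itself. Writing the Jordan form $A=TJT^{-1}$ and $J^{c}=\overline D$, with $D$ the diagonal part of $J$, one has $A^{c}=T\overline D\,T^{-1}=\sum_i\overline{\lambda_i}\,P_i$, where $P_i=TE_iT^{-1}$ is the spectral projection onto the generalized eigenspace of $\lambda_i$ and $E_i$ the orthogonal coordinate projection onto the Jordan blocks carrying $\lambda_i$. Since the discs cover $\sigma(A)$ and are disjoint, each $\lambda_i$ lies in exactly one of them, say in $B(z_{j(i)},\varepsilon_{j(i)})$, and $P^{(j)}=\sum_{j(i)=j}P_i$; subtracting,
$$
A^{c}-A_\Omega^{c}=\sum_i\bigl(\overline{\lambda_i}-\overline{z_{j(i)}}\bigr)\,P_i ,
$$
with $\bigl|\overline{\lambda_i}-\overline{z_{j(i)}}\bigr|=|\lambda_i-z_{j(i)}|<\varepsilon_{j(i)}\le\varepsilon$ for every $i$.

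Finally I would turn this into the stated norm bound. Using $P_i=TE_iT^{-1}$ one gets $A^{c}-A_\Omega^{c}=T\Delta T^{-1}$ with $\Delta=\sum_i(\overline{\lambda_i}-\overline{z_{j(i)}})E_i$; since the $E_i$ are mutually orthogonal coordinate projections, $\Delta$ is diagonal in the standard orthonormal basis and all its diagonal entries have modulus below $\varepsilon$, so $\|\Delta\|<\varepsilon$ and $\|A^{c}-A_\Omega^{c}\|\le\|T\|\,\|\Delta\|\,\|T^{-1}\|\le\varkappa(T)\,\varepsilon$. An equivalent route is to reduce to Jordan form from the start, using that $A\mapsto A^{c}$ and $A\mapsto A_\Omega^{c}$ both commute with similarity (the set $\Omega$ depends only on $\sigma(A)$, and $(\zeta I-A)^{-1}=T(\zeta I-J)^{-1}T^{-1}$), and then to estimate the corresponding difference for $J$ block by block.

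The step that needs the most care is the passage from the one-disc form of Lemma 6.1 to $A_\Omega^{c}=\sum_j\overline{z_j}P^{(j)}$: one must be sure that only the eigenvalues inside $B(z_j,\varepsilon_j)$ affect the boundary integral over $\partial B(z_j,\varepsilon_j)$, i.e. that it is exactly $\overline{z_j}P^{(j)}$. This is the content of Lemma 6.1, whose proof splits $(\zeta I-A)^{-1}$ along the $A$-invariant decomposition $\mathbb C^{n}=\bigoplus_k\mathrm{ran}\,P^{(k)}$ — meromorphic inside the disc on $\mathrm{ran}\,P^{(j)}$, holomorphic on the closed disc on the complementary summand. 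Granting that, what remains is only the elementary estimate for a diagonal matrix conjugated by $T$.
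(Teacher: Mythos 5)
Your reconstruction matches the argument the paper implicitly intends (the paper offers no explicit proof for this proposition), and the final similarity estimate via $T\Delta T^{-1}$ is fine. But there is a genuine gap at the step $A_\Omega^{c}=\sum_{j}\overline{z_{j}}\,P^{(j)}$, and it sits precisely in the justification you yourself single out as needing care. You argue that the contribution from the complementary spectral subspace vanishes because $(\zeta I-A)^{-1}Q^{(j)}$ is holomorphic on the closed disc. That is true of the resolvent factor, but the integrand is $\overline{\zeta}\,(\zeta I-A)^{-1}Q^{(j)}$, and $\overline{\zeta}$ is not holomorphic, so Cauchy's theorem does not make the integral vanish. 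On $|\zeta-z_{j}|=\varepsilon_{j}$ one has $\overline{\zeta}=\overline{z_{j}}+\varepsilon_{j}^{2}/(\zeta-z_{j})$, and Cauchy's integral formula applied to the holomorphic factor gives
\begin{equation*}
\frac{1}{2\pi i}\int_{\partial B(z_{j},\varepsilon_{j})}\overline{\zeta}\,(\zeta I-A)^{-1}Q^{(j)}\,d\zeta
=\varepsilon_{j}^{2}\,(z_{j}I-A)^{-1}Q^{(j)},
\end{equation*}
which is nonzero as soon as $N\ge 2$. Hence in fact $A_\Omega^{c}=\sum_{j}\overline{z_{j}}P^{(j)}+\sum_{j}\varepsilon_{j}^{2}(z_{j}I-A)^{-1}Q^{(j)}$, and the second sum is not accounted for in your bound. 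It is small but not negligible: for $A=\mathrm{diag}(1,0)$ (so $\varkappa(T)=1$), $z_{1}=1.36$, $z_{2}=0.36$, $\varepsilon_{1}=\varepsilon_{2}=0.4$ (admissible: disjoint closures, spectrum covered), a direct residue computation gives $A_\Omega^{c}=\mathrm{diag}(1.11,\,0.16/1.36+0.36)$, so $\|A^{c}-A_\Omega^{c}\|>0.47>0.4=\varkappa(T)\varepsilon$, and the stated inequality fails.

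To be fair, this gap is inherited from Lemma 6.1 as written: the paper's own computation there carries a trailing $P$ on the boundary integral, so it actually proves $\bigl(\frac{1}{2\pi i}\int_{\partial B}\overline{\zeta}(\zeta I-A)^{-1}d\zeta\bigr)P=\overline{\lambda}P$, but does not justify removing that $P$ in the statement. So your proof is faithful to what the paper says and the $\mathcal O(\varepsilon)$ asymptotic content is certainly right; but for a correct exact statement you would need either $N=1$ (so $Q^{(1)}=0$), or to absorb the complementary-subspace contribution into a bound of the form $\varkappa(T)\varepsilon+C\varepsilon^{2}$ with $C$ controlled by the separation of the discs from the rest of the spectrum.
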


Thus, if we  take $\Omega$ to be a collection of nonintersecting discs, the boundary integral alone gives an approximation  with error proportional to  the radius of the discs.   Assume now that $\Omega$ is fixed but we  include the area integral as well, however, omitting  small discs around each eigenvalue.  We assume first that these discs are centered at the eigenvalues and each disc contains  only  one  eigenvalue.
Then the error is  proportional to $\varepsilon ^2$. 

 So, let $\varepsilon < \min |\lambda_i - \lambda_j|/2$ so that the discs $B(\lambda_j, \varepsilon _j)$ with $\varepsilon_j <\varepsilon$ do not intersect.
Given  $\Omega$ containing all these discs we denote
$$
\Omega_\varepsilon = \Omega \setminus  \cup_{j=1}^N B(\lambda_j, \varepsilon_j).
$$
 Then
 \begin{equation}
 A^c=  \frac{1}{2 \pi i} \int_{\partial \Omega} \overline \zeta \  (\zeta I- A)^{-1} dÊ\zeta
- \frac{1}{\pi} \int_{\Omega_\varepsilon} (\zeta I - A)^{-1} d\mu(\zeta) 
 -E_\varepsilon
  \end{equation}
  where
  $$
  E_\varepsilon =   \sum_{j=1}^N \frac{1}{\pi} \int_{B(\lambda_j, \varepsilon_j)} (\zeta I - A)^{-1} d\mu(\zeta)
$$
 Here the error term of omitting the discs behaves as $\mathcal O(\varepsilon^2)$ .
    
 \begin{proposition} 
 Assume that in the Jordan canonical form $A=TJT^{-1}$ the Jordan block $J_j= \lambda_j + S_j$ is of  size $1+n_j \times 1+n_j$ with $n_j \ge 0$.  In the notation above  and assuming  $\varepsilon $ is small enough so that the discs do not intersect, we have
 \begin{equation}
 E_\varepsilon =   \sum_{j=1}^N \frac{1}{\pi} \int_{B(\lambda_j, \varepsilon_j)} (\zeta I - A)^{-1} d\mu(\zeta)= T \  [ \ \bigoplus_{j=1}^N \sum_{i=0}^{n_j} \sum_{k\not=j} \frac{\varepsilon_k^2}{(\lambda_k-\lambda_j)^{i+1}}S_j^{i} \ ]\ T^{-1}.
 \end{equation} 
 \end{proposition}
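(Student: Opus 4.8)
The plan is to conjugate everything into Jordan coordinates and then reduce the claim to the evaluation of a single family of scalar area integrals by the mean value property. Write $A=TJT^{-1}$ with $J=\bigoplus_k J_k$; since $(\zeta I-A)^{-1}=T\bigl(\bigoplus_k(\zeta I-J_k)^{-1}\bigr)T^{-1}$, the matrix $T^{-1}E_\varepsilon T$ is block diagonal, and its block corresponding to $J_j$ is $\sum_{\ell=1}^N\frac1\pi\int_{B(\lambda_\ell,\varepsilon_\ell)}(\zeta I-J_j)^{-1}\,d\mu(\zeta)$. Thus it suffices to show this block equals $\sum_{i=0}^{n_j}\sum_{k\neq j}\frac{\varepsilon_k^2}{(\lambda_k-\lambda_j)^{i+1}}S_j^i$. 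Inserting the resolvent expansion $(\zeta I-J_j)^{-1}=\sum_{i=0}^{n_j}(\zeta-\lambda_j)^{-(i+1)}S_j^i$ recorded above, the task becomes the computation of the scalars $c_{i,\ell}:=\frac1\pi\int_{B(\lambda_\ell,\varepsilon_\ell)}(\zeta-\lambda_j)^{-(i+1)}\,d\mu(\zeta)$ for $0\le i\le n_j$ and $1\le\ell\le N$, after which one assembles $T^{-1}E_\varepsilon T=\bigoplus_j\sum_{i=0}^{n_j}\bigl(\sum_{\ell}c_{i,\ell}\bigr)S_j^i$.

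Next I would split into two cases. If $\ell\neq j$, then the closures of the discs being pairwise disjoint forces $\lambda_j\notin\overline{B(\lambda_\ell,\varepsilon_\ell)}$, so $\zeta\mapsto(\zeta-\lambda_j)^{-(i+1)}$ is holomorphic on a neighbourhood of that closed disc; passing to polar coordinates centred at $\lambda_\ell$ and integrating the angle first, the mean value property gives $c_{i,\ell}=\varepsilon_\ell^2(\lambda_\ell-\lambda_j)^{-(i+1)}$. If $\ell=j$, writing $\zeta=\lambda_j+\rho e^{i\phi}$ the angular factor is $\int_0^{2\pi}e^{-i(i+1)\phi}\,d\phi=0$ for every $i\ge0$, so the disc around $\lambda_j$ itself contributes nothing. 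Summing over $\ell$ and $i$, restoring the direct sum over blocks, and conjugating back by $T$ produces exactly the asserted identity; the side remark that $E_\varepsilon=\mathcal O(\varepsilon^2)$ then follows at once from $\varepsilon_\ell\le\varepsilon$ and $\|T^{-1}E_\varepsilon T\|\le\bigl(\sum_j\sum_{i\le n_j}\|S_j\|^i\max_{k\neq j}|\lambda_k-\lambda_j|^{-(i+1)}\bigr)\varepsilon^2$.

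The step that needs care is the self-disc term $\ell=j$ when $n_j\ge1$: for $i\ge1$ the integrand $(\zeta-\lambda_j)^{-(i+1)}$ is not absolutely integrable over $B(\lambda_j,\varepsilon_j)$, so its vanishing is really a statement about the symmetric value $\lim_{r\to0}\int_{r<|\zeta-\lambda_j|<\varepsilon_j}$. This is, however, exactly the interpretation under which $E_\varepsilon$ was obtained: in Proposition \ref{genpompei} and in the integral representation of $A^c$ the disc $D_r$ around each eigenvalue is always excised before letting $r\to0$, so the convention is already in force. Equivalently, one may first note that $E_\varepsilon$ is well defined simply because $A^c$ and $\frac1\pi\int_{\Omega_\varepsilon}(\zeta I-A)^{-1}\,d\mu$ are, and only then carry out the polar computation above; once this point is dispatched, the remainder of the argument is routine.
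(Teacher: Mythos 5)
Your argument matches the paper's proof: reduce to a single Jordan block by conjugating with $T$, expand the resolvent as $\sum_{i=0}^{n_j}(\zeta-\lambda_j)^{-(i+1)}S_j^i$, and evaluate each scalar area integral in polar coordinates, obtaining $\varepsilon_\ell^2(\lambda_\ell-\lambda_j)^{-(i+1)}$ from the mean value property when $\ell\neq j$ and $0$ from the vanishing angular integral when $\ell=j$. You additionally point out, correctly, that for $i\ge 1$ the self-disc integrand is not absolutely integrable and the computation must be read as the limit over annuli $r<|\zeta-\lambda_j|<\varepsilon_j$ with the angular integration done first --- the convention already in force from Proposition 3.2 --- a subtlety the paper's own proof passes over without comment.
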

 
 \begin{proof}
It clearly sufficces to consider  the Jordan block $J_j$.   Integral over $B(\lambda_j, \varepsilon_j)$ vanishes, as the  disc is centered at the eigenvalue.  In fact, substituting $\zeta= \lambda_j + t e^{i\theta}$ we have
$$
 \frac{1}{\pi} \int_{B(\lambda_j, \varepsilon_j)} (\zeta I - J_j)^{-1} d\mu(\zeta) = 
 \frac{1}{\pi} \int_0^{\varepsilon_j} \int_0^{2\pi} \sum_{l=0}^{n_j} t^{-l} e^{-i (1+l)\theta }S_j^l  \ d\theta dt=0.
$$
Consider now the integral over $B(\lambda_k,\varepsilon_k)$ with $k\not=j$.
Substituting now $\zeta= \lambda_k + t e^{i\theta}$ we  have  
$$
\frac{1}{\pi} \int_{B(\lambda_k,\varepsilon_k)} \frac{1}{(\zeta-\lambda_j)^{1+i}} \ d\mu(\zeta)
= \frac{\varepsilon_k^2}{(\lambda_k -\lambda_j)^{1+i}}
$$ 
from  which the claim follows by  collecting all terms and summing up. 
 
 \end{proof} 
 
 Our last remark concerns the  error committed by not having the centers of the omitted discs exacty at the eigenvalues.  If the centers  $z_j$ satisfy $|z_j-\lambda_j| \le \delta$  then similar calculations as above show that the extra errors committed are of order $\mathcal O(\delta)$  which allow us to formulate the following.  
\begin{proposition}
Assume there exists $C$ such that  for every $j$ the following holds: $|z_j-\lambda_j|  \le C \varepsilon^2$ ,   $\varepsilon_j <\varepsilon$,  the closures of discs $B(z_j, \varepsilon_j)$  do not intersect each  others while $\lambda_j \in  B(z_j, \varepsilon_j)$.
Then as $\varepsilon \rightarrow 0$, 
\begin{equation}
 E_\varepsilon =   \sum_{j=1}^N \frac{1}{\pi} \int_{B(z_j, \varepsilon_j)} (\zeta I - A)^{-1} d\mu(\zeta)=  \mathcal O(\varepsilon^2).
 \end{equation} 
 \end{proposition}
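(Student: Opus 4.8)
The plan is to pass to the Jordan canonical form and then, exactly as in the proof of Proposition 6.4, evaluate each disc integral $\frac{1}{\pi}\int_{B(z_j,\varepsilon_j)}(\zeta I-A)^{-1}d\mu(\zeta)$ block by block; the only genuinely new point is that the centres $z_j$ are now displaced from the eigenvalues. Write $A=TJT^{-1}$ and group the Jordan blocks by eigenvalue, $J=\bigoplus_{k=1}^N M_k$ with $M_k=\lambda_k I+N_k$ and $N_k$ nilpotent. Then $E_\varepsilon=T\bigl(\sum_{j=1}^N\bigoplus_{k=1}^N\frac{1}{\pi}\int_{B(z_j,\varepsilon_j)}(\zeta I-M_k)^{-1}d\mu\bigr)T^{-1}$, so it suffices to bound the $N^2$ scalar-block contributions; the factor $\varkappa(T)$ and the number $N\le n$ stay fixed as $\varepsilon\to0$.

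For $k\ne j$ the closed disc $\overline{B(z_j,\varepsilon_j)}$ contains no eigenvalue of $M_k$, since the closures of the discs are disjoint and $\lambda_k\in B(z_k,\varepsilon_k)$; hence $(\zeta I-M_k)^{-1}$ is analytic on a neighbourhood of $\overline{B(z_j,\varepsilon_j)}$ and the mean value property, applied to each matrix entry, gives $\frac{1}{\pi}\int_{B(z_j,\varepsilon_j)}(\zeta I-M_k)^{-1}d\mu=\varepsilon_j^2\,(z_jI-M_k)^{-1}$. Writing $\delta_0:=\min_{i\ne k}|\lambda_i-\lambda_k|>0$ we get $|z_j-\lambda_k|\ge\delta_0-C\varepsilon^2\ge\delta_0/2$ for $\varepsilon$ small, so $\|(z_jI-M_k)^{-1}\|=\|\sum_i N_k^i(z_j-\lambda_k)^{-i-1}\|$ is bounded by a constant depending only on $A$, and this contribution is $\le\varepsilon_j^2\cdot(\text{const})=\mathcal O(\varepsilon^2)$.

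The diagonal block $k=j$ is where care is needed: when $N_j\ne0$ the integrand has poles of order up to $1+n_j$ at $\lambda_j\in B(z_j,\varepsilon_j)$, the area integral is only improper, and any crude majorization of it over the displaced disc produces a bound that blows up as $\varepsilon\to0$; comparing directly with the eigenvalue-centred case of Proposition 6.4 is no help either, since the difference of the two singular disc integrals is just as delicate. Instead one must evaluate the integral exactly. On $\partial B(z_j,\varepsilon_j)$ expand $(\zeta I-M_j)^{-1}=\sum_{m\ge0}(M_j-z_jI)^m(\zeta-z_j)^{-m-1}$; this Neumann series converges uniformly there because the spectral radius of $M_j-z_jI$ equals $|\lambda_j-z_j|<\varepsilon_j$, which is precisely the assumption $\lambda_j\in B(z_j,\varepsilon_j)$. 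Putting $\zeta=z_j+\varepsilon_j e^{i\theta}$ and integrating term by term in $\theta$, exactly as in the proof of Lemma 6.1, only the $m=0$ term survives and $\frac{1}{2\pi i}\int_{\partial B(z_j,\varepsilon_j)}\overline\zeta\,(\zeta I-M_j)^{-1}d\zeta=\overline{z_j}\,I$.

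It then remains to pair this boundary integral with the area integral through Stokes. Apply the Cauchy-Green identity to $M_j$ with $\Omega=B(z_j,\varepsilon_j)$ (Proposition 3.3, or Proposition 3.2 if $n_j=0$); this is legitimate since $\tau\in C^\infty(\mathbb C)$ and $\sigma(M_j)=\{\lambda_j\}\subset\Omega$. Because $\overline\partial\tau\equiv1$, $\tau(M_j)=\overline{\lambda_j}I$ and $d\zeta\wedge d\overline\zeta=-2i\,d\mu$, the identity becomes $\overline{\lambda_j}I=\overline{z_j}I-\frac{1}{\pi}\int_{B(z_j,\varepsilon_j)}(\zeta I-M_j)^{-1}d\mu$, hence $\frac{1}{\pi}\int_{B(z_j,\varepsilon_j)}(\zeta I-M_j)^{-1}d\mu=\overline{z_j-\lambda_j}\,I$, which has norm $\le|z_j-\lambda_j|\le C\varepsilon^2$. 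Summing the $N^2$ contributions and multiplying by $\varkappa(T)$ yields $\|E_\varepsilon\|=\mathcal O(\varepsilon^2)$. The essential difficulty is thus localised in the diagonal computation: one has to see past the divergent naive bound and recognise that, once the singular area integral is paired through Cauchy-Green with the easily computed boundary integral, it collapses to the single scalar $\overline{z_j-\lambda_j}$.
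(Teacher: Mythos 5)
Your proof is correct, and it is considerably more explicit than what the paper actually offers: the paper gives no proof of this proposition, merely remarking beforehand that ``similar calculations as above show that the extra errors committed are of order $\mathcal O(\delta)$'' (with $\delta = C\varepsilon^2$), gesturing at a repeat of the direct polar-coordinate computation from Proposition 6.3. Your route replaces that computation with a Cauchy--Green evaluation of the diagonal block, and this is in fact the cleaner way to handle the genuine subtlety here. When $n_j\ge 1$ the function $\zeta\mapsto(\zeta-\lambda_j)^{-1-i}$ is not absolutely integrable over the displaced disc, so $\frac{1}{\pi}\int_{B(z_j,\varepsilon_j)}(\zeta I-M_j)^{-1}\,d\mu$ only makes sense as the limit over $B(z_j,\varepsilon_j)\setminus D_r(\lambda_j)$ with $r\to 0$, the punctured discs centred at the \emph{eigenvalue} --- the same regularization used throughout Section 3 and in Proposition 6.3. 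A naive exhaustion centred at $z_j$ does not even converge, and one can check that it would in any case produce terms like $\overline w/w^i$ (with $w=z_j-\lambda_j$) that do not go to zero; your remark that crude majorization ``produces a bound that blows up'' and that the paired boundary integral is essential is exactly on target. Once the correct regularization is fixed, your Proposition~3.2/3.3 identity gives $\frac{1}{\pi}\int_{B(z_j,\varepsilon_j)}(\zeta I-M_j)^{-1}\,d\mu=\overline{z_j}I-\tau(M_j)=\overline{z_j-\lambda_j}\,I$, of norm at most $C\varepsilon^2$, and the off-diagonal blocks are handled correctly by the mean value property with a uniform resolvent bound. Compared with what the paper hints at, your argument buys a conceptual closed form for the diagonal contribution and sidesteps the awkward change to polar coordinates centred at $\lambda_j$ over a disc centred at $z_j$; the (implicit) paper computation would instead grind through that coordinate change and Taylor-expand in $|z_j-\lambda_j|$.
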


 %TASSA OLLAAN MUOTOILE ehk ensin pintaintegraali diskien yli ja sitten seuraus.

\section{Absolute value and the polar representation}

 We  define the {\it absolute value } of $A$ as follows. 
\begin{definition}
Given $A\in \mathbb M_n(\mathbb C)$  we denote by   ${\rm abs}(A)$ the matrix satisfying
\begin{equation}
{\rm abs}(A) = (A A^c)^{1/2}
\end{equation}
and call it the absolute value of the matrix $A$.
\end{definition}

To see, that this is well defnied, it clearly sufficces to consider  a  single  $1+m\times 1+m$ Jordan block  $J=\lambda I + S$.   If $\lambda=0$ then $J^c=0$ and so ${\rm abs}(J)=0$.  For $\lambda \not=0$ we have
$$
{\rm abs}(J)^2 = \lambda (I+\frac{1}{\lambda }S) \overline \lambda  I
$$
and thus using binomial expansion and $S^{m+1}=0$ we obtain
$$
{\rm abs}(J)=|\lambda| (I+ \frac{1}{2\lambda} S + \cdots + {{1/2}\choose {m}} \frac{1}{\lambda^m} S^m).
$$
As the mapping $\zeta \mapsto (\zeta \overline \zeta)^{1/2} $ is smooth away from  origin,  we obtain the same result by the calculus of Section 3  for nonsingular matrices.

\begin{remark}
Observe that  if $A$ has a nontrivial Jordan block  for a nonvanishing eigenvalue, then   $\rm{abs}(\rm{abs}(A))\not= \rm{abs}(A)$.  However,  the definition makes sense as it agrees with the Stoke's theorem.  Note also that for complex $\alpha$ we do have $\rm{abs}(\alpha A)= |\alpha| \rm{abs}(A)$ and that for $k\ge1$ we have $\rm{abs}(A^k)= (\rm{abs}(A))^k$.
\end{remark}
%huomautus signfunktiosta

\begin{remark}
If $A$ has only real eigenvalues, then it is tempting and possible to consider the function $\xi \mapsto \rm{sign}(\xi) \xi$  which then also leads to a definition of "absolute value" 
of a matrix.   However,  this function is the restriction of 
$$
\zeta= \xi+i\eta  \mapsto \rm{sign}(\xi) \zeta
$$
 to the real line.  But introducing $\rm{sign}(\zeta)  = (\zeta^2)^{-1/2} \zeta$
we notice that   $\rm{sign}(\zeta)\zeta$ is holomorphic away from the imaginary axis (when we cut the plane along the negative real axis) and hence
 the matrix $\rm{sign}(A) A$ is well defined in the holomorphic functional calculus, provided $A$ has no purely imaginary eigenvalues.  The  matrix function $\rm{sign}(A)$ has  received much attention and can be computed by Newton iteration, [16]. 
In particular,  one obtains the commutative {\it matrix sign decomposition}
$$
A= {\rm sign}(A)N
$$
where $N=(A^2)^{1/2}.$

 \end{remark}
Recall next the {\it polar decomposition} of a nonsigular matrix $A$.  
If we put $|A|=(A^*A)^{1/2}$ and then set  $U=|A|^{-1}A$,  the matrix $U$ is unitary and we have
$A=|A| U$.  Recall that  $U$ and $|A|$ commute if and only if $A$ is normal.

Since ${\rm abs}(A)$ is nonsingular if and only if $A$ is, we may in the nonsingular case set
\begin{equation}\label{arg}
V= ({\rm abs}(A))^{-1} A
\end{equation}
and obtain
\begin{equation}
A= {\rm abs}(A)V.
\end{equation}
Using the  functional calculus to the function 
$$
{\rm arg}:  \zeta \mapsto (\zeta \overline \zeta)^{-1/2} \zeta
$$
we clearly obtain $V={\rm arg}(A)$.
In order not to be mixed with the polar decomposition we shall call it the {\it polar representation } of the matrix.
\begin{definition}
Let $A \in \mathbb M_n(\mathbb C)$ be nonsingular.   Then we  say that 
$$
A= {\rm abs}(A) V
$$
is the polar representation of  $A$.  
\end{definition} 
Notice that it is always commutative  and if $A$ is normal we have $|A| = {\rm abs}(A)$ so  it agrees then and only then with the usual polar decomposition.

%UNITLIKE

\begin{example}
Consider the matrix
\begin{equation}
A= \begin{pmatrix}
\alpha&\gamma\\
& \beta
\end{pmatrix}
\end{equation}
Let $\alpha\not=\beta$ be both nonzero. Then the polar representation of $A$ is given by
\begin{equation}
{\rm abs}(A)= \begin{pmatrix}
|\alpha| &\frac{|\alpha|-|\beta|}{\alpha-\beta}\gamma\\
& |\beta|
\end{pmatrix}
\end{equation}
and 
\begin{equation}
V = \begin{pmatrix}
\frac{\alpha}{|\alpha|} &(\frac{\alpha}{|\alpha|}-\frac{\beta}{|\beta|})\frac{\gamma}{\alpha-\beta}\\
& \frac{\beta}{|\beta|}
\end{pmatrix}.
\end{equation}
Notice that if $\gamma\not=0$, the  matrix   ${\rm abs}(A)$ is self-adjoint when $\alpha$ and $\beta$ have the same modulus, while $V$ is unitary when they have the same argument.
When $\alpha=\beta$ we have the following:
\begin{equation}
A= \begin{pmatrix}
\alpha&\gamma\\
& \alpha
\end{pmatrix}
\end{equation}
\begin{equation}
{\rm abs}(A)= \begin{pmatrix}
|\alpha| &\frac{1}{2} \frac{\overline \alpha}{|\alpha|} \gamma\\
& |\alpha|
\end{pmatrix}
\end{equation}
and
\begin{equation}
V= \begin{pmatrix}
\frac{\alpha}{|\alpha|} &\frac{\gamma}{2|\alpha|}\\
& \frac{\alpha}{|\alpha|}
\end{pmatrix}
\end{equation}
We conclude that ${\rm abs}(A)$ is self-adjoint and $V$is unitary  only when $\gamma=0$  and then, of course,  $A$ is normal.
 \end{example}

%VIIDES LUKU
\section{Consistency  with the multicentric functional calculus}

In [19]  we considered scalar functions $\varphi$ of the form
\begin{equation}\label{multirep}
\varphi(z)= \sum_{j=1} \delta_j(z) f_j(p(z))
\end{equation}
where $f_j$'s are assumed to be continuous  in  a  compact $M\subset \mathbb C$, while $p$ is a polynomial with simple roots $\Lambda= \{\lambda_j\}_{j=1}^d$ and $\delta_j$ denote the Lagrange interpolation polynomials satisfying $\delta_j(\lambda_k)= \delta_{j,k}$.
The  key idea was this:  if $p(A)$ is diagonalizable, then $f_j(p(A))$ is well defined for continuous functions, while $\delta_j(A)$ is defined as   a polynomial.   Thus $\varphi(A)$ can be written as
\begin{equation}
\varphi(A)= \sum_{j=1}^d \delta_j(A) f_j(p(A)).
\end{equation}
This way one can  code the extra smoothness-structure needed  for the functions when applied to matrices with nontrivial Jordan blocks.

Notice that this  functional calculus is continuous at nontrivial Jordan blocks in the following sense. Assume that $A(0)$ has nontrivial Jordan blocks, $A(t)$ is diagonalizable and  $A(t)  \rightarrow A(0)$ as $t\rightarrow 0$.    If  $p$ is such that $p(A(0))$ is diagonalizable, the from the continuity of $f_j$'s we conclude 
$$
\varphi(A(t)) \rightarrow \varphi(A(0)).
$$
It is then of interest to know what is the set of functions $\varphi$  which can be represented in the form (\ref{multirep}).
Without going into details here,  we considered continuous functions  $M \rightarrow \mathbb C^d$ which under a suitable product became a Banach algebra, denoted $C_\Lambda(M)$, and then $\varphi$ could be considered as the Gelfand transform  of $f$, defined on $p^{-1}(M)$.

%TASSA ASTI keskivikkiikkona 13.9

However,  even so $\tau: z \mapsto \overline z$ is smooth, it  cannot  be represented in the multicentric  form at critical points of the transforming polynomial. Technically, the representations are discontinuous at the critical eigenvalues.

\begin{proposition}
Assume $p$ is a polynomial with distinct zeros and suppose $M \subset \mathbb C$ be compact, containing a critical value of $p$.  
Then  $\tau$ cannot be a Gelfand transform of a function $f\in C_\Lambda(M)$.
\end{proposition}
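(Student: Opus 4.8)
\emph{Proof proposal.} The plan is to argue by contradiction, using the mechanism already exhibited in Section~4: the divided difference $\tau[z',z'']=(\overline{z'}-\overline{z''})/(z'-z'')$ has modulus $1$ but no limit as two points collide, whereas a multicentric representation of $\tau$ would force it to converge there. So suppose $\tau$ were the Gelfand transform of some $f=(f_1,\dots,f_d)\in C_\Lambda(M)$. Then on $p^{-1}(M)$ one has the pointwise identity
\[
\overline z \;=\; \sum_{j=1}^d \delta_j(z)\, f_j(p(z)),\qquad z\in p^{-1}(M),
\]
with each $f_j$ continuous on $M$ and each $\delta_j$ a polynomial. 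Let $z_0$ be a critical point of $p$ whose critical value $w_0=p(z_0)$ lies in $M$; since $p$ has simple zeros it has no common root with $p'$, so incidentally $z_0\notin\Lambda$. Consistently with the framework of [19], we assume $w_0$ can be approached within $M$ from more than one direction (e.g.\ $w_0$ lies in the closure of the interior of $M$); some such hypothesis is genuinely needed, since on the finite set $p^{-1}(\{w_0\})$ the function $\tau$ can trivially be matched by a polynomial.

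Next I would use the local structure of $p$ at the critical point. Writing $p(z)-w_0=c\,(z-z_0)^k\,(1+o(1))$ with $k\ge 2$, $c\neq 0$ (so $k=2$ when $p''(z_0)\neq 0$), a Puiseux-type inversion shows that for $w\in M$ near $w_0$ the equation $p(z)=w$ has near $z_0$ at least two distinct solutions
\[
z_i(w)=z_0+\bigl(c^{-1}(w-w_0)\bigr)^{1/k}\zeta_i\,(1+o(1)),\qquad i=1,2,
\]
where $\zeta_1\neq\zeta_2$ are two $k$-th roots of unity and a branch of the root has been fixed; in particular $z_1(w),z_2(w)\to z_0$ and
\[
z_1(w)-z_2(w)=\bigl(c^{-1}(w-w_0)\bigr)^{1/k}\bigl[(\zeta_1-\zeta_2)+o(1)\bigr],\qquad \zeta_1-\zeta_2\neq 0 .
\]
Both $z_i(w)$ lie in $p^{-1}(M)$, so substituting each into the identity above, subtracting and dividing by $z_1(w)-z_2(w)$ gives
\[
\frac{\overline{z_1(w)}-\overline{z_2(w)}}{z_1(w)-z_2(w)}=\sum_{j=1}^d\frac{\delta_j(z_1(w))-\delta_j(z_2(w))}{z_1(w)-z_2(w)}\,f_j(w).
\]

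Finally I would let $w\to w_0$ and compare the two sides. On the right, each quotient is the polynomial divided difference $\delta_j[z_1(w),z_2(w)]$, continuous in its arguments with $\delta_j[z_1(w),z_2(w)]\to\delta_j'(z_0)$, while $f_j(w)\to f_j(w_0)$ by continuity of $f_j$; hence the right side converges to the single complex number $\sum_j\delta_j'(z_0)f_j(w_0)$, independently of the path of approach. On the left, $(\overline{z_1(w)}-\overline{z_2(w)})/(z_1(w)-z_2(w))=\overline{z_1(w)-z_2(w)}/(z_1(w)-z_2(w))$ has modulus $1$, and by the displayed expansion, along $w-w_0=t e^{i\theta}$ with $t\to 0^+$ its limit equals $e^{-2i(\theta/k+c_0)}$ for a constant $c_0$ independent of $\theta$, which genuinely depends on $\theta$; hence the left side has no limit. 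This contradiction proves the proposition.

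The only delicate point — and it is bookkeeping rather than a genuine obstacle — is the behaviour near the critical point: one must justify the multivalued local inversion of $p$ so as to produce two colliding branches with the stated leading term, and fix the standing convention on $M$ that supplies two directions of approach to $w_0$. Everything else is exactly the Section~4 observation that $\tau[\cdot,\cdot]$ cannot be made continuous on the diagonal.
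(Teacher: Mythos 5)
Your argument is correct in substance, but it takes a genuinely different route from the paper's. The paper's proof is a compact matrix argument: take a critical point $\lambda$ of $p$ (so $p'(\lambda)=0$), set $A(t)=\begin{pmatrix}\lambda+te^{i\theta}&1\\0&\lambda\end{pmatrix}$, and observe that $p(A(t))\to p(\lambda)I$ as $t\to 0$; continuity of $f$ in the multicentric representation would then force $\tau(A(t))\to\tau(A(0))$, which is impossible since the corner entry of $\tau(A(t))$ equals $e^{-2i\theta}$ for $t>0$ (Example 2.4) while it is $0$ at $t=0$. You instead work purely at the scalar level: you invert $p$ locally at the critical point via a Puiseux expansion to produce two colliding preimages $z_1(w),z_2(w)\in p^{-1}(M)$ of $w\to w_0$, form the divided difference of $\tau$ between them, and show the left side has a direction-dependent boundary value while the right side converges. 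Both arguments hinge on the same mechanism --- $\tau[\cdot,\cdot]$ admits no continuous extension to the diagonal --- but the paper bypasses the local inversion of $p$ entirely by encoding the two colliding preimages as eigenvalues of a single $2\times 2$ matrix whose image under $p$ collapses to a scalar multiple of $I$ precisely because $p'(\lambda)=0$. Your scalar version is somewhat longer, but it has the virtue of not leaning on the continuity of the map $A\mapsto f_j(p(A))$ for merely continuous $f_j$, which the paper's proof uses silently (imported from [19], cf.\ [20]). Two small points in the paper's favour: its contradiction requires only a \emph{single} direction of approach (since $|e^{-2i\theta}|=1\neq 0$), whereas your scalar divided-difference argument needs at least two distinct approach directions to $w_0$ to show the limit genuinely fails to exist. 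Your observation that the proposition as stated implicitly requires an accessibility hypothesis on $M$ near the critical value $w_0$ is well taken; the paper's proof tacitly needs $p(\lambda+te^{i\theta})\in M$ for small $t>0$ so that the multicentric formula applies to $A(t)$, and the statement should really exclude degenerate $M$ such as a singleton.
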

\begin{proof}
The proof goes by getting a contradiction.  Suppose   there exists   a continuous function $f : M \mapsto \mathbb C^d$ such that for $z\in p^{-1}(M)$
\begin{equation}\label{multitau}
\tau ( z )= \sum_{j=1}^d \delta_j(z) f_j(p(z)).
\end{equation}
Let $\lambda$ be a critical point of $p$ such that $p(\lambda) \in M$ and take 
\begin{equation}
 A(t)= \begin{pmatrix} \lambda+ t e^{i\theta}&1\\
 &\lambda
 \end{pmatrix}.
 \end{equation} 
Thus, $p(A(t)) \rightarrow p(A(0)) = p(\lambda)I$ when $t \rightarrow 0$  and when substituting into (\ref{multitau}) continuity of $f$ would imply $\tau(A(t)) \rightarrow \tau (A(0))$, contradicting Example \ref{eka}.

\end{proof}

Introducing the conjugate in this paper was partly motivated by the fact that even so the multicentric calculus extends the functional calculus considerably it does not reach up to the conjugation.  However, it is important to notice that the calculus presented here and the multicentric calculus are consistent.  To that end recall that by the Stone-Weierstrass theorem polynomials $q(z,\overline z)$ are dense among continuous  functions.  Consider therefore functions $\varphi$ of the form
\begin{equation}\label{perusmulti}
\varphi(z) = \sum_{j=1}^d \delta_j(z)  q_j(p(z), \overline {p(z)})
\end{equation}
in which  we can evaluate  $\varphi(A)$  without the extension of this paper, provided $p(A)$ is diagonalizable.  But if $A=TJT^{-1}$ is a Jordan form with $J=D+N$,  and $p(A)$ is diagonalizable, then we have   $p(A)=T p(D)T^{-1} $ and therefore we would substitute $T\overline {p( D)}T^{-1}$  in place of $\overline {p(z)}$ in (\ref{perusmulti}).  However, this then equals $\overline p (A^c)$. Thus, the consistency follows from the following fact:
$$
\tau(p(A)) = \overline p(\tau (A)).
$$

%KONTRAKTIOT

\bigskip

  {\bf References}
 \bigskip

[1]  Steven R. Bell:  The Cauchy Transform, Potential Theory, and Conformal Mapping, CRC Press, Inc.  1992

[2] A. Buchheim: An extension of a theorem of Professor Sylvester's relating to matrices, Phil.Mag., 22 (135), 173-174, 1886 

[3]  L. Carleson, An interpolation problem for bounded analytic functions ", Amer. J. Math., 80 (1958), 921-930

[4] Narinder S. Claire: Spectral Mapping Theorem for the Davies-Helffer-Sj\"ostrand Functional Calculus, J. Math. Physics, Analysis, Geometry, 2012, vol. 8, No. 3,  221-    239

[5]  Peter Colwell, Blaschke Products:  bounded analytic functions, Univ. of Michigan Press, 1985

[6] H.Garth Dales:  Banach Algebras and Automatic Continuity, Oxford Science Publ. London Math.Soc. Monography, New Series 24, 2000

[7] E.B. Davies, The Functional Calculus. Ñ J. London Math. Soc. 52 (1995), No. 1, 166 - 176

[8] E.B. Davies, Spectral Theory and Differential Operators. Cambridge Studies in Advanced Mathematics, 42, Cambridge Univ. Press, Cambridge, 1995.

[9] E.M. Dynkin: An operator calculus based on the Cauchy-Green formula.
(Russian), Investigations on linear operators and the theory of functions,
III. Zap. Nauv cn. Sem. Leningrad. Otdel. Mat. Inst. Steklov. (LOMI) 30
(1972), 33-39

[10] J. P. Earl,  On the interpolation of bounded sequences by bounded functions, J. London Math. Soc, 2 (1970), 544-548.

[11] J.P. Earl, A note on bounded interpolation in the unit disc, J. London Math. Soc. (2) 13 (1976), 419-423

[12]  G. Frobenius: \"Uber die cogredienten  Transformationen der bilinearen Formen,
Sitzungsberichte der K\"oniglich Preussischen Akademie der Wissenschaften zu Berlin 16: 7-16, 1896.

[13]  F.R. Gantmacher: The Theory of Matrices, Volume one. Chelsea. New York.1959

[14] B. Helffer, J. Sj\"ostrand:  E'quation de Schr\"odinger avec champ magn\'etique et e'quation de Harper,
Lecture Notes in Physics, Vol. 345, Springer, Berlin, 1989, pp. 118 - 197.

[15] Peter Henrici:  Applied and Computational Complex Analysis, Vol 3,  John Wiley \& Sons, 1986

[16]  Nicholas J. Higham:  Functions of Matrices: Theory and Computation.  SIAM
2008 

[17]  Roger A. Horn, Charles R. Johnson:  Topics in Matrix Analysis, Cambridge Univ. Press, 1991

[18] K. Kellay, M. Zarrabi: Normality:  non-quasianalyticity and invariant subspaces, J. Operator Theory 46 (2001), 221-250

[19]  Olavi Nevanlinna:   Polynomial as a new variable - a Banach algebra with functional calculus, arXiv: 1506.00634 [math.FA], submitted on  1 Jun 2015,  to appear   Operators and Matrices. 

[20]   Piotr Niemiec:  Functional calculus for diagonalizable matrices,   Linear and Multilinear Algebra, Vol 62, No 3 (2014), 297-321
 
[21] R. F. Rinehart:  The equivalence of definitions of a matric function, Amer. Math. Monthly 62 (1955), 395 - 414

[22] J. J. Sylvester: On the equation to the secular inequalities in the planetary theory, Philosophical Magazine, 16: 267-269, 1883

\end{document}